\newtheorem{thm}{Theorem}
\newtheorem{prop}{Proposition}
\newtheorem{lemma}{Lemma}
\newtheorem{cor}{Corollary}
\newcommand{\im}{\operatorname{im}}
\newcommand{\End}{\operatorname{End}}
\newcommand{\coker}{\operatorname{coker}}
\newcommand{\T}{\mathcal{T}}
\DeclareSymbolFont{script}{U}{eus}{m}{n}
\DeclareMathSymbol{\Wedge}{0}{script}{"5E}
\begin{document}
\title{Calculus on symplectic manifolds}
\author[Michael Eastwood]{Michael Eastwood}
\address{\hskip-\parindent
School of Mathematical Sciences\\
University of Adelaide\\ 
SA 5005\\ 
Australia}
\email{meastwoo@member.ams.org}
\author[Jan Slov\'ak]{Jan Slov\'ak}
\address{\hskip-\parindent
Department of Mathematics and Statistics\\
Masaryk University,\newline
611 37 Brno, Czech Republic}
\email{slovak@math.muni.cz}
\subjclass{53D05, 53B35}
\thanks{This research was supported by the Czech Grant Agency. The
authors would like to thank the Agency for their generous support under Grant
P201/12/G028.}
\thanks{This work was also supported by the Simons Foundation grant 
346300 and the Polish Government MNiSW 2015--2019 matching fund. It was 
completed whilst the authors were visiting the Banach Centre at IMPAN in 
Warsaw for the Simons Semester `Symmetry and Geometric Stuctures.'}
\begin{abstract} On a symplectic manifold, there is a natural elliptic complex
replacing the de~Rham complex. It can be coupled to a vector bundle with
connection and, when the curvature of this connection is constrained to be a
multiple of the symplectic form, we find a new complex. In particular, on
complex projective space with its Fubini--Study form and connection, we can
build a series of differential complexes akin to the
Bernstein--Gelfand--Gelfand complexes from parabolic differential geometry.
\end{abstract}
\renewcommand{\subjclassname}{\textup{2010} Mathematics Subject Classification}
\maketitle
\section{Introduction}
Throughout this article $M$ will be a smooth manifold of dimension $2n$
equipped with a symplectic form $J_{ab}$. Here, we are using Penrose's abstract
index notation~\cite{OT} and non-degeneracy of this $2$-form says that there
is a skew contravariant $2$-form $J^{ab}$ such that $J_{ab}J^{ac}=\delta_b{}^c$
where $\delta_b{}^c$ is the canonical pairing between vectors and co-vectors.

Let $\Wedge^k$ denote the bundle of $k$-forms on~$M$. The homomorphism
$$\Wedge^k\to\Wedge^{k-2}\enskip\mbox{given by}\enskip
\phi_{abc\cdots d}\mapsto J^{ab}\phi_{abc\cdots d}$$
is surjective for $2\leq k\leq n$ with non-trivial kernel, corresponding to 
the irreducible representation 
$$\rule[-10pt]{20pt}{0pt}\begin{picture}(145,10)
\put(5,3){\line(1,0){20}}
\put(5,2.6){\makebox(0,0){$\bullet$}}
\put(20,2.6){\makebox(0,0){$\bullet$}}
\put(36,2.6){\makebox(0,0){$\cdots$}}
\put(50,2.6){\makebox(0,0){$\bullet$}}
\put(45,3){\line(1,0){40}}
\put(65,2.6){\makebox(0,0){$\bullet$}}
\put(80,2.6){\makebox(0,0){$\bullet$}}
\put(96,2.6){\makebox(0,0){$\cdots$}}
\put(105,3){\line(1,0){20}}
\put(110,2.6){\makebox(0,0){$\bullet$}}
\put(125,2.6){\makebox(0,0){$\bullet$}}
\put(125,5){\line(1,0){15}}
\put(125,1){\line(1,0){15}}
\put(140,2.6){\makebox(0,0){$\bullet$}}
\put(132.5,3){\makebox(0,0){$\langle$}}
\put(5,5){\makebox(0,0)[b]{\scriptsize$\vphantom{(}0$}}
\put(20,5){\makebox(0,0)[b]{\scriptsize$\vphantom{(}0$}}
\put(50,5){\makebox(0,0)[b]{\scriptsize$\vphantom{(}0$}}
\put(65,5){\makebox(0,0)[b]{\scriptsize$\vphantom{(}1$}}
\put(80,5){\makebox(0,0)[b]{\scriptsize$\vphantom{(}0$}}
\put(110,5){\makebox(0,0)[b]{\scriptsize$\vphantom{(}0$}}
\put(125,5){\makebox(0,0)[b]{\scriptsize$\vphantom{(}0$}}
\put(140,5){\makebox(0,0)[b]{\scriptsize$\vphantom{(}0$}}
\put(65,-10){\vector(0,1){8}}
\put(70,-6){\makebox(0,0)[l]{\scriptsize{$k^{\mathrm{th}}$ node}}}
\end{picture}\quad\mbox{of}\quad
{\mathrm{Sp}}(2n,{\mathbb{R}})\subset{\mathrm{GL}}(2n,{\mathbb{R}}).$$
Denoting this bundle by $\Wedge_\perp^k$, there is a canonical splitting of 
the short exact sequence
$$0\to\Wedge_\perp^k\raisebox{-6.3pt}{$\begin{array}{c}
\rightleftarrows\\[-8pt] \mbox{\scriptsize$\pi$}\end{array}$}
\Wedge^k\to\Wedge^{k-2}\to 0$$
and an elliptic complex~\cite{BEGN,E,ES,S,TY}
\begin{equation}
\label{RScomplex}\addtolength{\arraycolsep}{-1pt}\begin{array}{rcccccccccccc}
0&\to&\Wedge^0&\stackrel{d}{\longrightarrow}&\Wedge^1
&\stackrel{d_\perp}{\longrightarrow}&\Wedge_\perp^2
&\stackrel{d_\perp}{\longrightarrow}&\Wedge_\perp^3
&\stackrel{d_\perp}{\longrightarrow}&\cdots
&\stackrel{d_\perp}{\longrightarrow}&\Wedge_\perp^{n}\\[2pt]
&&&&&&&&&&&&\big\downarrow\makebox[0pt][l]{\scriptsize$d_\perp^2$}\\
0&\leftarrow&\Wedge^0&\stackrel{d_\perp}{\longleftarrow}&\Wedge^1
&\stackrel{d_\perp}{\longleftarrow}&\Wedge_\perp^2
&\stackrel{d_\perp}{\longleftarrow}&\Wedge_\perp^3
&\stackrel{d_\perp}{\longleftarrow}&\cdots
&\stackrel{d_\perp}{\longleftarrow}&\Wedge_\perp^{n}
\end{array}\end{equation}
where
\begin{itemize}
\item $d:\Wedge^0\to\Wedge^1$ is the exterior derivative,
\item for $1\leq k< n$, the operator
$d_\perp:\Wedge_\perp^k\to\Wedge_\perp^{k+1}$ is the composition
$$\Wedge_\perp^k\hookrightarrow\Wedge^k\xrightarrow{\,d\,}\Wedge^{k+1}
\xrightarrow{\,\pi\,}\Wedge_\perp^{k+1},$$
a first order operator,
\item $d_\perp:\Wedge_\perp^{k+1}\to\Wedge_\perp^k$ are canonically defined 
first order operators, which may be seen as adjoint to 
$d_\perp:\Wedge_\perp^k\to\Wedge_\perp^{k+1}$,
\item $d_\perp^2:\Wedge_\perp^n\to\Wedge_\perp^n$ is the composition
$$\Wedge_\perp^n\xrightarrow{\,d_\perp\,}\Wedge_\perp^{n-1}
\xrightarrow{\,d_\perp\,}\Wedge_\perp^n,$$
a second order operator.
\end{itemize}
More explicitly, formul{\ae} for these operators may be given as follows.
Firstly, it is convenient to choose a {\em symplectic connection\/}~$\nabla_a$,
namely a torsion-free connection such that $\nabla_aJ_{bc}=0$, equivalently
$\nabla_aJ^{bc}=0$. As shown in~\cite{GRS}, for example, such connections
always exist and if $\nabla_a$ is one such, then the general symplectic
connection is
$$\hat\nabla_a\phi_b=\nabla_a\phi_b+J^{cd}\Xi_{abc}\phi_d\quad\mbox{where}
\enskip\Xi_{abc}=\Xi_{(abc)}.$$
Then, for $1\leq k<n$, the operator
$d_\perp:\Wedge_\perp^k\to\Wedge_\perp^{k+1}$ 
is given by 
\begin{equation}\label{early}\textstyle\phi_{def\cdots g}\longmapsto
\nabla_{[c}\phi_{def\cdots g]}
-\frac{k}{2(n+1-k)}J^{ab}(\nabla_a\phi_{b[ef\cdots g})J_{cd]}\end{equation}
and $d_\perp:\Wedge_\perp^{k+1}\to\Wedge_\perp^k$ is given by
\begin{equation}\label{late}
\psi_{cdef\cdots g}\longmapsto J^{bc}\nabla_b\psi_{cdef\cdots g}.\end{equation}

Now suppose $E$ is a smooth vector bundle on $M$ and
$\nabla:E\to\Wedge^1\otimes E$ is a connection. Choosing any torsion-free
connection on $\Wedge^1$ induces a connection on $\Wedge^1\otimes E$ and, as is
well-known, the composition
$$\Wedge^1\otimes E\to
\Wedge^1\otimes\Wedge^1\otimes E\to\Wedge^2\otimes E$$
does not depend on this choice. (It is the second in a well-defined sequence of
differential operators
\begin{equation}\label{coupled_de_Rham}
E\xrightarrow{\,\nabla\,}\Wedge^1\otimes E
\xrightarrow{\,\nabla\,}\Wedge^2\otimes E
\xrightarrow{\,\nabla\,}\cdots
\xrightarrow{\,\nabla\,}\Wedge^{2n-1}\otimes E
\xrightarrow{\,\nabla\,}\Wedge^{2n}\otimes E\end{equation}
known as the {\em coupled de~Rham sequence\/}.)
In particular, we may define a homomorphism
$\Theta:E\to E$ by 
$$\textstyle J^{ab}\nabla_a\nabla_b\Sigma=\frac1{2n}\Theta\Sigma
\quad\mbox{for}\enskip\Sigma\in\Gamma(E).$$ 
It is part of the curvature of~$\nabla$ and if this is the only curvature, then
\begin{equation}\label{Theta_in_the_symplectically_flat_case}
(\nabla_a\nabla_b-\nabla_b\nabla_a)\Sigma=2J_{ab}\Theta\Sigma,\end{equation}
and we shall say that $\nabla$ is {\em symplectically flat\/}. Looking back at 
(\ref{RScomplex}), it is easy to see that there are coupled operators
$$E
\begin{array}c\scriptstyle\nabla\\[-8pt] \longrightarrow\\[-10pt] 
\longleftarrow\\[-9pt] \scriptstyle{}\enskip\nabla_\perp\end{array}
\Wedge^1\otimes E
\begin{array}c\scriptstyle\;\nabla\!{}_\perp\\[-8pt] \longrightarrow\\[-10pt] 
\longleftarrow\\[-9pt] \scriptstyle{}\;\nabla\!{}_\perp\end{array}
\Wedge_\perp^2\otimes E
\begin{array}c\scriptstyle\;\nabla\!{}_\perp\\[-8pt] \longrightarrow\\[-10pt] 
\longleftarrow\\[-9pt] \scriptstyle{}\;\nabla\!{}_\perp\end{array}
\cdots
\begin{array}c\scriptstyle\;\nabla\!{}_\perp\\[-8pt] \longrightarrow\\[-10pt] 
\longleftarrow\\[-9pt] \scriptstyle{}\;\nabla\!{}_\perp\end{array}
\Wedge_\perp^{n-1}\otimes E
\begin{array}c\scriptstyle\;\nabla\!{}_\perp\\[-8pt] \longrightarrow\\[-10pt] 
\longleftarrow\\[-9pt] \scriptstyle{}\;\nabla\!{}_\perp\end{array}
\Wedge_\perp^n\otimes E,$$
explicit formul\ae\ for which are just as in the uncoupled cases
(\ref{early}) and~(\ref{late}). 
To complete the coupled version of (\ref{RScomplex}) let us use
\begin{equation}\label{middle_operator}
\textstyle\nabla^2_\perp-\frac2n\Theta:
\Wedge_\perp^n\otimes E\longrightarrow\Wedge_\perp^n\otimes E\end{equation}
for the middle operator. It is evident that 
$$E\stackrel{\nabla}{\longrightarrow}\Wedge^1\otimes E
\xrightarrow{\,\nabla_\perp\,}\Wedge_\perp^2\otimes E$$
is a complex if and only if $\nabla$ is symplectically flat. The reason for the
curvature term in (\ref{middle_operator}) is that this feature propagates as
follows. 
\begin{thm}\label{one}
Suppose $E\xrightarrow{\,\nabla\,}\Wedge^1\otimes E$ is a
symplectically flat connection and define $\Theta:E\to E$ 
by~\eqref{Theta_in_the_symplectically_flat_case}.
Then the coupled version of \eqref{RScomplex}
$$\addtolength{\arraycolsep}{-1pt}\begin{array}{rcccccccccc}
0&\to&E&\stackrel{\nabla}{\longrightarrow}&\Wedge^1\otimes E
&\stackrel{\nabla_\perp}{\longrightarrow}&\Wedge_\perp^2\otimes E
&\stackrel{\nabla_\perp}{\longrightarrow}&\cdots
&\stackrel{\nabla_\perp}{\longrightarrow}&\Wedge_\perp^n\otimes E\\[2pt]
&&&&&&&&&&
\big\downarrow
\makebox[0pt][l]{\scriptsize$\nabla_\perp^2-\frac2{n}\Theta$}\\
0&\leftarrow&E&\stackrel{\nabla_\perp}{\longleftarrow}&\Wedge^1\otimes E
&\stackrel{\nabla_\perp}{\longleftarrow}&\Wedge_\perp^2\otimes E
&\stackrel{\nabla_\perp}{\longleftarrow}&\cdots
&\stackrel{\nabla_\perp}{\longleftarrow}&\Wedge_\perp^n\otimes E
\end{array}\quad$$
is a complex. It is locally exact except near the beginning where 
$$\ker\nabla:E\to\Wedge^1\otimes E\quad\mbox{and}\quad
\frac{\ker\nabla_\perp:\Wedge^1\otimes E\to\Wedge_\perp^2\otimes E}
{\im\nabla:E\to\Wedge^1\otimes E}$$
may be identified with the kernel and cokernel, respectively, of\/ $\Theta$ as
locally constant sheaves.
\end{thm}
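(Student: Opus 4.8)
The plan is to reduce everything to a local statement and then treat separately that the diagram is a complex and that it has the asserted cohomology. All assertions being local, I fix a contractible open set, a symplectic connection $\nabla_a$, and the Lefschetz operators: $L\colon\Wedge^k\to\Wedge^{k+2}$ (wedge with $J_{ab}$), its contraction $\Lambda\colon\Wedge^k\to\Wedge^{k-2}$ (with $J^{ab}$), and the weight operator, which generate the pointwise $\mathfrak{sl}_2$ whose lowest weight spaces are the $\Wedge_\perp^k$. Two facts carry the argument: $[\nabla,L]=0$ (since $\nabla_aJ_{bc}=0$) and $\pi\circ L=0$; and symplectic flatness is precisely the statement $\nabla^2=c\,L\Theta$ on $\Wedge^\bullet\otimes E$ for a universal constant $c$, which by the Bianchi identity forces $\nabla_a\Theta=0$ — so $\Theta$ is locally constant, $\ker\Theta$ and $\im\Theta$ are parallel sub-bundles, and $\coker\Theta$ carries an induced flat connection.

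That the diagram is a complex is then a finite computation. For the upward operators it is free: filtering $\Wedge^\bullet\otimes E$ by the powers $F^p=L^p(\Wedge^{\bullet-2p}\otimes E)$ of the Lefschetz ideal, the curvature $\nabla^2=c\,L\Theta$ lands in $F^1$, so $\nabla$ induces honest differentials on the subquotients, and $\operatorname{gr}^0$ is exactly $E\to\Wedge^1\otimes E\to\Wedge_\perp^2\otimes E\to\cdots\to\Wedge_\perp^n\otimes E$ with differential $\nabla_\perp$. For the downward operators and the two junction conditions at the centre I use the explicit formul\ae: each composition equals its uncoupled counterpart, which vanishes by \cite{BEGN,E,ES,S,TY}, plus curvature terms built from $\Theta$; these extra terms drop out either because they factor through $\pi L=0$ or because they contract $J^{ab}$ into a primitive form — except at the centre, where the residual obstruction to $\nabla_\perp^2\circ\nabla_\perp=0$ (and to $\nabla_\perp\circ\nabla_\perp^2=0$) equals $\tfrac2n\Theta$ composed with the adjacent $\nabla_\perp$. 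This is the computation for which $\Theta$ was normalised as it was, and the $-\tfrac2n\Theta$ in \eqref{middle_operator} is exactly the correction that restores a complex. I expect the constant-chasing in these central identities to be the fussiest part here.

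For exactness and the cohomology at the beginning I run a spectral-sequence argument for the same filtration (mild care is needed because $(\Wedge^\bullet\otimes E,\nabla)$ is only a complex modulo $F^1$). The associated graded consists of degree-shifted, progressively truncated copies of $(\Wedge_\perp^{\bullet\le n}\otimes E,\nabla_\perp)$, so the matter reduces to: (i) the uncoupled local exactness of \cite{BEGN,E,ES,S,TY}, which on a ball applies with coefficients in a \emph{flat} bundle because such a bundle is trivial there; and (ii) control of $\Theta$, the obstruction to invoking (i) directly. For (ii) I use that on a ball one may pick a $1$-form $\theta$ with $d\theta=J$ and then $\nabla':=\nabla-c\,\theta\,\Theta$ is an \emph{honestly flat} connection on $E$ — the curvature contributions telescope because $d\theta=J$ and $\theta\wedge\theta=0$ — so the flat Poincar\'e lemma becomes available. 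Feeding this in, $(\Wedge_\perp^{\bullet\le n}\otimes E,\nabla_\perp)$ on a ball has cohomology $\underline{\ker\Theta}$ in degree $0$, $\underline{\coker\Theta}$ in degree $1$, and nothing beyond; in particular $\Theta$ annihilates this cohomology (it kills degree $0$ because $0=\nabla^2\Sigma=c\,L\Theta\Sigma$ forces $\Theta\Sigma=0$, and kills degree $1$ because writing $\nabla\alpha=L\mu$ for $\nabla_\perp\alpha=0$ and differentiating gives $\nabla\mu=c\,\Theta\alpha\in\im(\nabla\colon E\to\Wedge^1\otimes E)$), so the $\Theta$-contributions to the spectral sequence differentials drop out and the $-\tfrac2n\Theta$ correction is precisely what makes the central second-order operator fit the resulting pattern. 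This yields exactness away from the first two slots and the cohomology there.

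Finally I would record the identification at the beginning directly. The kernel $\ker(\nabla\colon E\to\Wedge^1\otimes E)$ lies in the parallel sub-bundle $\ker\Theta$, on which $\nabla$ is flat, so it is the locally constant sheaf $\ker\Theta$. The assignment taking $\alpha$ with $\nabla_\perp\alpha=0$ to the class of the unique $\mu$ with $\nabla\alpha=L\mu$ is well defined modulo $\im\nabla$ (replacing $\alpha$ by $\alpha+\nabla\sigma$ replaces $\mu$ by $\mu+c\,\Theta\sigma$), lands in flat sections of $\coker\Theta$ (because $\nabla\mu=c\,\Theta\alpha$), is onto (solve $\nabla\alpha=L\mu$ with the symplectic potential), and is one-to-one (if $\mu=\Theta\tau$ then $\alpha-\tfrac1c\nabla\tau$ is $\nabla$-closed and $\Theta$-annihilated, hence in $\im(\nabla\colon E\to\Wedge^1\otimes E)$ by the flat Poincar\'e lemma on $\ker\Theta$); for $n=1$ a short direct check replaces these manipulations. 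The main obstacle throughout is that $(\Wedge^\bullet\otimes E,\nabla)$ is not a complex, so the uncoupled theory cannot simply be quoted: the work lies in organising the computation so that every occurrence of $\Theta$ is either annihilated by $\pi L=0$, or annihilates the cohomology in play, or is absorbed into the $-\tfrac2n\Theta$ term, while keeping track of the truncations of the associated graded near the two ends.
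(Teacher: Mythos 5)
Your two structural inputs are exactly the right ones and match the paper's: the Bianchi identity forces $\nabla_a\Theta=0$ (so $\ker\Theta$ and $\coker\Theta$ are parallel subbundles with flat induced connections), and on a ball the modified connection $\nabla-\theta\otimes\Theta$ with $d\theta=J$ is honestly flat. Your closing paragraph identifying the zeroth and first cohomologies with $\ker\Theta$ and $\coker\Theta$ is essentially the paper's own computation and is sound. The genuine gap is in the middle of your argument: you propose to run the spectral sequence of the Lefschetz filtration on $(\Wedge^\bullet\otimes E,\nabla)$, but this is not a filtered complex --- $\nabla^2=J\wedge\Theta$ does not vanish --- and the ``mild care'' you defer is in fact the central construction of the proof. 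Concretely, for $x\in F^p$ with $\nabla x\in F^{p+1}$ one has $\nabla^2x=J\wedge\Theta x\in F^{p+1}$ but not in $F^{p+2}$ unless $\Theta x\in F^{p+1}$, so the would-be $d_1$ does not land in cocycles of the associated graded; the obstruction is $\Theta$ acting on $\operatorname{gr}^p$ itself, not on its cohomology, and your claim that ``$\Theta$ annihilates the cohomology in play'' only covers degrees $0$ and $1$. Each graded piece is truncated as $\cdots\to\Wedge_\perp^{n}\otimes E\to 0$, so its top cohomology is the infinite-dimensional cokernel of a differential operator, on which $\Theta$ certainly does not act by zero; cancelling these against neighbouring graded pieces is precisely where the downward operators and the second-order middle operator must come from, and the proposal does not engage with that. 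Relatedly, the reduction to the uncoupled references is not available as stated: the flat connection $\nabla-\theta\otimes\Theta$ does not induce the operators $\nabla_\perp$ of the theorem, because $\theta$ is a general $1$-form and the correction does not respect the primitive decomposition.

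The missing idea is the paper's Lemma~\ref{key_lemma}: one must first assemble the auxiliary elliptic complex with terms $\Wedge^k\otimes E\oplus\Wedge^{k-1}\otimes E$ and differentials built from $\nabla$, $J\wedge$, and $\Theta$. This \emph{is} a complex, precisely because $\nabla\Theta=0$ and $\nabla^2=J\wedge\Theta$ with no higher corrections, and its local exactness away from degrees $0$ and $1$ follows from your flat modification by a diagram chase. Once that complex exists, either a legitimate spectral sequence (the route of \cite{ES}, which the paper explicitly mentions) or explicit translation between its sections and sections of $\Wedge_\perp^k\otimes E$ finishes the job; the paper opts for the latter, carrying out two typical chases by hand (local exactness at $\Wedge_\perp^2\otimes E$, and the complex property plus exactness at the middle, using the characterisation $J\wedge\nabla_\perp\psi=\tfrac2n\nabla\psi$ on $\Wedge_\perp^n\otimes E$). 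You have all the ingredients of Lemma~\ref{key_lemma} scattered through your write-up, but without actually constructing that complex the homological machinery you invoke has no object to act on.
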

\noindent 
More precision and a proof of Theorem~\ref{one} will be provided in 
\S\ref{rumin_seshadri}.
Our next theorem yields some natural symplectically flat connections.
\begin{thm}\label{two}
Suppose $M$ is a\/ $2n$-dimensional symplectic manifold with
symplectic connection~$\nabla_a$. Then there is a natural vector bundle\/
${\T}$ on $M$ of rank $2n+2$ equipped with a connection, which is 
symplectically flat if and only if the curvature $R_{ab}{}^c{}_d$ of 
$\nabla_a$ has the form
\begin{equation}\label{Vis0}
R_{ab}{}^c{}_d=\delta_a{}^c\Phi_{bd}-\delta_b{}^c\Phi_{ad}
+J_{ad}\Phi_{be}J^{ce}-J_{bd}\Phi_{ae}J^{ce}+2J_{ab}\Phi_{de}J^{ce},
\end{equation}
for some symmetric tensor~$\Phi_{ab}$.
\end{thm}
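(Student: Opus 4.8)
The plan is to mimic the construction of the standard tractor bundle of projective and conformal geometry. Fix the symplectic connection $\nabla_a$ and introduce the \emph{symplectic Rho-tensor} $\mathrm{P}_{ab}$, a fixed multiple of the Ricci curvature of $\nabla_a$; it is symmetric because $\nabla_a$ preserves the Liouville volume $J^n$. I would take $\T$ to be the rank $2n+2$ bundle carrying a filtration with associated graded $\underline{\mathbb{R}}\oplus TM\oplus\underline{\mathbb{R}}$, the two line bundles paired so that $\T$ acquires a symplectic form $\Omega$ restricting to $J_{ab}$ on $TM$, and equip it with the tractor-type connection which, for a section written $(\sigma,\xi^b,\rho)$ relative to a splitting of the filtration, takes roughly the form
$$\nabla_a\!\begin{pmatrix}\sigma\\\xi^b\\\rho\end{pmatrix}=\begin{pmatrix}\nabla_a\sigma-\xi_a\\ \nabla_a\xi^b+\delta_a{}^b\rho+\mathrm{P}_a{}^b\sigma\\ \nabla_a\rho-\mathrm{P}_{ab}\xi^b\end{pmatrix},$$
indices being moved with $J$ and the numerical coefficients fixed by insisting that $\Omega$ be parallel and that the construction be natural, i.e.\ independent of the choice of symplectic connection up to a compensating change of splitting, exactly as for the projective tractor connection. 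This produces the natural bundle with connection asserted by Theorem~\ref{two}.

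Next I would compute the curvature $R^{\T}{}_{ab}$ of $\nabla^\T$ by applying $\nabla_a\nabla_b-\nabla_b\nabla_a$ to a section and sorting the output by filtration degree. As in the projective case I expect the $\mathrm{P}$-quadratic terms to cancel, leaving an $\End(\T)$-valued $2$-form built from three pieces: the totally trace-free ``Weyl part'' $W_{ab}{}^c{}_d$ of the curvature $R_{ab}{}^c{}_d$ of $\nabla_a$, the Cotton-type tensor $Y_{abc}:=2\nabla_{[a}\mathrm{P}_{b]c}$, and a remainder proportional to $J_{ab}$. The input here is the decomposition, under $\mathrm{Sp}(2n,\mathbb{R})$, of the space of algebraic symplectic curvature tensors into a ``Ricci part'' --- which is exactly the set of tensors of the form on the right of \eqref{Vis0} with $\Phi_{ab}$ symmetric, the symmetric $\Phi_{ab}$ being recovered, up to a universal constant, as the Ricci contraction of $R_{ab}{}^c{}_d$ --- together with the complementary irreducible summand carrying $W$. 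In particular $R_{ab}{}^c{}_d$ has the form \eqref{Vis0} for a symmetric $\Phi_{ab}$ if and only if $W=0$, and then $\Phi_{ab}$ is a fixed multiple of $\mathrm{P}_{ab}$.

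Finally I would read off symplectic flatness. By \eqref{Theta_in_the_symplectically_flat_case} this demands that $R^{\T}{}_{ab}$ be a multiple of $J_{ab}$, equivalently that its image under the projection $\pi\colon\Wedge^2\to\Wedge_\perp^2$ of \eqref{RScomplex} vanish; from the shape of $R^\T$ this is the conjunction of $W_{ab}{}^c{}_d=0$ and the vanishing of the $\Wedge_\perp^2$-component of $Y_{abc}$. The first condition is \eqref{Vis0}, and it already implies the second: differentiating \eqref{Vis0}, substituting into the second Bianchi identity $\nabla_{[e}R_{ab]}{}^c{}_d=0$ and contracting forces $\nabla_{[a}\mathrm{P}_{b]c}=J_{ab}Z_c$ for some covector $Z_c$, which kills the $\perp$-part of $Y$. (When $2n=2$ there is nothing to check, since $\Wedge_\perp^2=0$ and every symplectic curvature already has the form \eqref{Vis0}; so $\T$ is then automatically symplectically flat.) Hence $\nabla^\T$ is symplectically flat precisely when \eqref{Vis0} holds.

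The hard part is the middle step: the curvature computation must be carried out carefully enough to be certain that the only portion of $R^{\T}{}_{ab}$ capable of lying in $\Wedge_\perp^2\otimes\End(\T)$ is the one built from $W$ and from the $\perp$-part of $Y$ --- in particular that no $\mathrm{P}$-quadratic cross-term survives there --- and, alongside it, the contracted second Bianchi identity showing that $W=0$ annihilates the $\perp$-part of $Y$. Pinning down the normalising constants in $\nabla^\T$ and in $\mathrm{P}_{ab}$ so that the leftover $J_{ab}$-term is genuinely $2J_{ab}$ times a single endomorphism $\Theta$ of $\T$ is then forced, and is not where the difficulty lies.
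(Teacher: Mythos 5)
Your strategy is the paper's: form the rank-$(2n+2)$ bundle ${\T}=\Wedge^0\oplus\Wedge^1\oplus\Wedge^0$ with a conformal-style tractor connection built from a multiple $\Phi_{ab}$ of the Ricci tensor, compute its curvature, and use the Bianchi identity to show that symplectic flatness is equivalent to the vanishing of the trace-free part $V_{ab}{}^c{}_d$ of $R_{ab}{}^c{}_d$, i.e.\ to~\eqref{Vis0}. The gap sits exactly where you locate the ``hard part.'' Your connection omits a term in the bottom slot: the correct connection is $\nabla_a\rho-\Phi_{ab}J^{bc}\mu_c+S_a\sigma$ with $S_a=\frac1{2n+1}J^{bc}\nabla_c\Phi_{ab}$, and this $S_a\sigma$ term is indispensable. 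It is invisible to your normalising principle, since the two $S_a$ contributions cancel identically in the pairing $\langle\cdot,\cdot\rangle$, so parallelism of $\Omega$ does not force it. Without it, the $\sigma$-coefficient in the middle slot of the curvature is $\nabla_{[a}\Phi_{b]d}$, and the contracted Bianchi identity \eqref{contracted_Bianchi} gives, when $V=0$, $\nabla_{[a}\Phi_{b]d}=J_{d[a}S_{b]}-J_{ab}S_d$ --- \emph{not} $J_{ab}Z_d$. The term $J_{d[a}S_{b]}$ has a nonzero component in $\Wedge_\perp^2$ whenever $S_a\neq0$ and $n\geq2$, so your claim that $W=0$ forces $\nabla_{[a}\mathrm{P}_{b]c}=J_{ab}Z_c$ is false, and your connection would be symplectically flat only under the strictly stronger condition $V=0$ \emph{and} $S_a=0$. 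With the $S_a\sigma$ term included, the tensor actually occurring in the curvature is $T_{abc}=\nabla_{[a}\Phi_{b]c}-J_{c[a}S_{b]}=\frac12Y_{abc}-J_{ab}S_c$, where $Y$ is a fixed multiple of the divergence of $V$ (not of $2\nabla_{[a}\Phi_{b]c}$, as you define it); this \emph{does} reduce to a multiple of $J_{ab}$ when $V=0$.

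A second, related omission concerns the bottom slot, whose $\sigma$-coefficient is $\nabla_{[a}S_{b]}-J^{cd}\Phi_{ac}\Phi_{bd}$: the $\Phi$-quadratic terms do not simply cancel as you expect from the projective analogy. Showing that this expression is a multiple of $J_{ab}$ when $V=0$ requires a second, second-order identity --- the paper's \eqref{nablaY} --- expressing $4n(\nabla_{[a}S_{b]}-J^{cd}\Phi_{ac}\Phi_{bd})$ in terms of $V$, $\nabla Y$ and a term proportional to $J_{ab}$. Your proposal supplies neither the corrected connection nor either identity in usable form, so the ``if'' direction of the equivalence is not established; the ``only if'' direction (reading off $V=0$ from the $\mu$-slot) survives.
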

\noindent In particular, the Fubini--Study connection on complex projective
space is symplectic for the standard K\"ahler form and its curvature is of 
the form (\ref{Vis0}) for $\Phi_{ab}=g_{ab}$, the standard metric. 
More generally, if the symplectic connection $\nabla_a$ arises from a K\"ahler
metric, then we shall see that (\ref{Vis0}) holds precisely in the case of
constant holomorphic sectional curvature.

After proving Theorems~\ref{one} and~\ref{two}, the remainder of this article
is concerned with the consequences of Theorem~\ref{one} for the vector bundle
${\T}$ and those bundles, such as~$\bigodot^k\!{\T}$, induced
from it. In particular, these consequences pertain on complex projective space 
where we shall find a series of elliptic complexes closely following the 
Bernstein-Gelfand-Gelfand complexes on the sphere $S^{2n+1}$ as a homogeneous 
space for the Lie group ${\mathrm{Sp}}(2n+2,{\mathbb{R}})$.

This article is based on our earlier work~\cite{ES} but here we focus on the
simpler case where we are given a symplectic structure as background. This
results in fewer technicalities and in this article we include more detail,
especially in constructing the BGG-like complexes in~\S\ref{BGG-like}. 

\section{The Rumin--Seshadri complex}\label{rumin_seshadri}
By the {\em Rumin--Seshadri complex\/}, we mean the differential complex
(\ref{RScomplex}) after~\cite{S}. However, the $4$-dimensional case is due to
R.T.~Smith~\cite{Sm} and the general case is also independently due to Tseng
and Yau~\cite{TY}. In this section we shall derive the coupled version of this
complex as in Theorem~\ref{one}, our proof of which includes (\ref{RScomplex})
as a special case. The following lemma is also the key step in~\cite{ES}.
\begin{lemma}\label{key_lemma}
Suppose $E$ is a vector bundle on $M$ with symplectically flat
connection $\nabla:E\to\Wedge^1\otimes E$. Define $\Theta:E\to E$
by~\eqref{Theta_in_the_symplectically_flat_case}. Then $\Theta$ has constant
rank and the bundles $\ker\Theta$ and $\coker\Theta$ acquire from~$\nabla$,
flat connections defining locally constant sheaves \underbar{$\ker\Theta$} and
\underbar{$\coker\Theta$}, respectively. There is an elliptic complex
$$\begin{array}{cccccccccc}E
&\stackrel{\nabla}{\longrightarrow}&\Wedge^1\otimes E
&\stackrel{\nabla}{\longrightarrow}&\Wedge^2\otimes E
&\stackrel{\nabla}{\longrightarrow}&\Wedge^3\otimes E
&\stackrel{\nabla}{\longrightarrow}&\Wedge^4\otimes E\\
&\begin{picture}(0,0)(0,-3)
\put(-9,6){\vector(3,-2){18}}\end{picture}
&\oplus
&\begin{picture}(0,0)(0,-3)
\put(-9,-6){\vector(3,2){18}}
\put(-9,6){\vector(3,-2){18}}\end{picture}
&\oplus
&\begin{picture}(0,0)(0,-3)
\put(-9,-6){\vector(3,2){18}}
\put(-9,6){\vector(3,-2){18}}\end{picture}
&\oplus
&\begin{picture}(0,0)(0,-3)
\put(-9,-6){\vector(3,2){18}}
\put(-9,6){\vector(3,-2){18}}\end{picture}
&\oplus&\cdots,
\\
&&E&\longrightarrow&\Wedge^1\otimes E
&\longrightarrow&\Wedge^2\otimes E
&\longrightarrow&\Wedge^3\otimes E
\end{array}$$
where the differentials are given by 
$$\Sigma\!\mapsto\!\left[\!\begin{array}{c}\nabla\Sigma\\ 
\Theta\Sigma\end{array}\!\right]
\quad
\left[\!\begin{array}{c}\phi\\ \eta\end{array}\!\right]
\!\mapsto\!\left[\!\begin{array}{c}\nabla\phi-J\otimes\eta\\ 
\nabla\eta-\Theta\phi\end{array}\!\right]
\quad\left[\!\begin{array}{c}\omega\\ \psi\end{array}\!\right]
\!\mapsto\!\left[\!\begin{array}{c}\nabla\omega+J\wedge\psi\\ 
\nabla\psi+\Theta\omega\end{array}\!\right]\enskip\cdots.$$
It is locally exact save for the zeroth and first cohomologies, which may be
identified with \underbar{$\ker\Theta$} and \underbar{$\coker\Theta$}, 
respectively.
\end{lemma}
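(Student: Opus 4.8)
The plan is to proceed in four steps: an algebraic preliminary making $\Theta$ parallel, the verification that the displayed diagram is a complex, ellipticity, and finally local exactness together with the identification of the bottom two cohomologies. First I would differentiate the curvature identity~\eqref{Theta_in_the_symplectically_flat_case}. Since the connection on $\Wedge^1\otimes E$ comes from a torsion-free connection, the differential Bianchi identity applied to the $\End E$-valued curvature two-form $2J_{ab}\Theta$ reads $J_{[bc}\nabla_{a]}\Theta=0$; contracting with $J^{ab}$ and using $J^{ab}J_{ab}=2n$ gives $(2n-2)\nabla_c\Theta=0$, hence $\nabla\Theta=0$. A parallel endomorphism has locally constant characteristic polynomial, so in particular constant rank, and $\ker\Theta$ and $\im\Theta$ are $\nabla$-parallel subbundles. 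Therefore $\nabla$ induces connections on $\ker\Theta$ and on $\coker\Theta=E/\im\Theta$ whose curvatures are the restriction, resp.\ the corestriction, of $2J_{ab}\Theta$ and so vanish; these flat connections define the locally constant sheaves $\underline{\ker\Theta}$ and $\underline{\coker\Theta}$.

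Next I would check that the diagram is a complex and is elliptic. Writing a section of the $k$th slot as a pair $(\alpha,\beta)$ with $\alpha\in\Wedge^k\otimes E$, $\beta\in\Wedge^{k-1}\otimes E$, one composes consecutive differentials and observes that every resulting term is dispatched by one of three facts: on $\Wedge^\bullet\otimes E$ one has $\nabla^2=J\wedge\Theta(\,\cdot\,)$ (the $\Wedge^\bullet$-part of the curvature dropping out by the first Bianchi identity, the remaining $E$-curvature being $2J_{ab}\Theta$); $\nabla\Theta=0$, so $\nabla$ commutes with $\Theta$; and $dJ=0=\nabla J$, so $\nabla$ commutes with $J\wedge(\,\cdot\,)$. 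The degree-dependent signs displayed in the differentials are precisely what is needed to make these cancellations occur. For ellipticity, $\Theta$ and $J\wedge(\,\cdot\,)$ have order zero, so the principal symbol at a nonzero covector $\xi$ is $(\alpha,\beta)\mapsto(\xi\wedge\alpha,\xi\wedge\beta)$, and the symbol sequence is a direct sum of two copies — one shifted by a place — of the Koszul complex $0\to\Wedge^0\otimes E\xrightarrow{\xi\wedge}\Wedge^1\otimes E\to\cdots\to\Wedge^{2n}\otimes E\to 0$, which is exact for $\xi\neq 0$.

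For local exactness, the zeroth cohomology is immediate: the kernel of $\Sigma\mapsto(\nabla\Sigma,\Theta\Sigma)$ consists of the $\nabla$-parallel sections lying in $\ker\Theta$, i.e.\ $\underline{\ker\Theta}$. For the rest I would induct on $\operatorname{rank}E$, writing $C^\bullet(F)$ for the analogous complex built from any bundle $F$ carrying a symplectically flat connection. If $\Theta$ is invertible the complex is exact, even globally: a cocycle $(\alpha,\beta)$ in positive degree $k$ must satisfy $\alpha=(-1)^{k+1}\Theta^{-1}\nabla\beta$ by the lower component of the cocycle equation, and is then visibly the image of $(-1)^{k-1}\Theta^{-1}\beta$, while $H^0$ vanishes since $\Theta$ is injective. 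If $\Theta=0$ then $E$ is flat and $C^\bullet(E)$ is the mapping cone of $J\wedge(\,\cdot\,)\colon(\Wedge^{\bullet-2}\otimes E,\nabla)\to(\Wedge^\bullet\otimes E,\nabla)$; on a contractible open set the coupled de~Rham complex satisfies the Poincar\'e lemma and $[J]=0$, so the cone has cohomology $\underline E$ in degrees $0$ and $1$ and nothing higher, the degree-one class being read off the $\beta$-component. For the inductive step, $\im\Theta$ and $\coker\Theta$ are $\nabla$-parallel of strictly smaller rank and there is a short exact sequence of complexes $0\to C^\bullet(\im\Theta)\to C^\bullet(E)\to C^\bullet(\coker\Theta)\to 0$, where the quotient is the $\Theta=0$ complex of the flat bundle $\coker\Theta$ (the $\Theta$-terms landing in $\im\Theta$). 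Plugging the inductive hypothesis for $C^\bullet(\im\Theta)$ and the $\Theta=0$ case for the quotient into the associated long exact sequence kills cohomology in degrees $\geq 2$ and presents $H^1$, locally, as an extension of $\underline{\coker\Theta}$ by the cokernel of the connecting map $\partial\colon H^0\bigl(C^\bullet(\coker\Theta)\bigr)\to H^1\bigl(C^\bullet(\im\Theta)\bigr)$.

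The hard part is to evaluate this connecting homomorphism. Representing a class in $H^0(C^\bullet(\coker\Theta))$ by a $\nabla$-parallel section $\sigma$ of $\coker\Theta$, lifting it to a section $\tilde\sigma$ of $E$, applying the differential so as to land in $C^1(\im\Theta)$, and reading off the $\beta$-component modulo $\im\Theta^2$ should give $[\Theta\tilde\sigma]$ — that is, the image of $\sigma$ under the natural surjection $\coker\Theta\twoheadrightarrow\im\Theta/\im\Theta^2$ induced by $\Theta$. Surjectivity then collapses the extension above to $H^1\cong\underline{\coker\Theta}$, with the isomorphism again given by the $\beta$-component, so that the induction is self-consistent. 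This evaluation of $\partial$, along with the sign bookkeeping in showing that consecutive differentials compose to zero, are the only points demanding genuine care; the remainder is standard homological algebra together with the cited exactness of the Koszul complex and the coupled Poincar\'e lemma.
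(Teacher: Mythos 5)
Your preliminaries coincide with the paper's: both derive $\nabla\Theta=0$ from the Bianchi identity applied to the curvature $2J_{ab}\Theta$ (the paper invokes injectivity of $J\wedge\colon\Wedge^1\to\Wedge^3$ where you contract with $J^{ab}$ — the same computation), then deduce constant rank, the flat connections on $\ker\Theta$ and $\coker\Theta$, and ellipticity via the shifted sum of Koszul complexes. For local exactness, however, you take a genuinely different route. The paper's device is to choose locally a $1$-form $\tau$ with $d\tau=J$ and observe that $\tilde\nabla=\nabla-\tau\otimes\Theta$ is \emph{flat}; the complex is then compared by a diagram chase (carried out in~\cite{ES}) with the exact $\tilde\nabla$-coupled de~Rham complex. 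You instead run an induction on $\operatorname{rank}E$ using the short exact sequence of complexes $0\to C^\bullet(\im\Theta)\to C^\bullet(E)\to C^\bullet(\coker\Theta)\to 0$, with base cases $\Theta$ invertible (where the complex is contractible via $\Theta^{-1}$) and $\Theta=0$ (where it is the mapping cone of $J\wedge$ on the flat coupled de~Rham complex), closing the induction by identifying the connecting map $H^0(\coker\Theta)\to H^1(\im\Theta)$ with the surjection $\coker\Theta\twoheadrightarrow\im\Theta/\im\Theta^2$ induced by~$\Theta$. That computation of $\partial$ is correct, and your remark that the inductive hypothesis must be strengthened to record that the $H^1$-isomorphism is read off the $\beta$-component is precisely what makes the induction close; the sequence of section spaces is exact because short exact sequences of smooth vector bundles split. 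The paper's twist buys brevity — a single flat connection with the primitive $\tau$ absorbing all of $J$'s cohomological content at once — while your argument confines the analytic input to the ordinary Poincar\'e lemma for flat coupled de~Rham complexes and replaces the diagram chase by functorial homological algebra, at the price of carrying explicit isomorphisms through the induction. Both are valid proofs.
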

\begin{proof} {From} (\ref{Theta_in_the_symplectically_flat_case}) the Bianchi
identity for $\nabla$ reads
$$0=\nabla_{[a}\big(J_{bc]}\Theta\big)=J_{[ab}\nabla_{c]}\Theta$$
and non-degeneracy of $J_{ab}$ implies that $\nabla_a\Theta=0$. Consequently,
the homomorphism $\Theta$ has constant rank and the following diagram with 
exact rows commutes
$$\addtolength{\arraycolsep}{-2pt}\begin{array}{ccccccccccc}
0&\to&\ker\Theta&\to&E&\xrightarrow{\,\Theta\,}&E&\to&\coker\Theta&\to&0\\
&&&&\downarrow\!\makebox[0pt][l]{\scriptsize$\nabla$}
&&\downarrow\!\makebox[0pt][l]{\scriptsize$\nabla$}\\
0&\to&\Wedge^1\otimes\ker\Theta&\to&\Wedge^1\otimes E&\xrightarrow{\,\Theta\,}
&\Wedge^1\otimes E&\to&\Wedge^1\otimes\coker\Theta&\to&0
\end{array}$$
and yields the desired connections on $\ker\Theta$ and~$\coker\Theta$, which
are easily seen to be flat. Ellipticity of the given complex is readily
verified and, by definition, the kernel of its first differential 
is~\underbar{$\ker\Theta$}. To identify the higher local cohomology of this 
complex the key observation is that locally we may choose a $1$-form $\tau$ 
such that $d\tau=J$ and, having done this, the connection
$$\Gamma(E)\ni\Sigma\stackrel{\tilde\nabla}{\longmapsto}
\nabla\Sigma-\tau\otimes\Theta\Sigma\in
\Gamma(\Wedge^1\otimes E)$$
is flat. The rest of the proof is diagram chasing, using exactness of 
$$E\xrightarrow{\,\tilde\nabla\,}\Wedge^1\otimes E
\xrightarrow{\,\tilde\nabla\,}\Wedge^2\otimes E
\xrightarrow{\,\tilde\nabla\,}\Wedge^3\otimes E
\xrightarrow{\,\tilde\nabla\,}\Wedge^4\otimes E
\xrightarrow{\,\tilde\nabla\,}\cdots.$$
If needed, the details are in~\cite{ES}.
\end{proof}

\noindent{\em Proof of Theorem~\ref{one}}. In \cite{ES}, the corresponding 
result \cite[Theorem~4]{ES} is proved by invoking a spectral sequence. Here, 
we shall, instead, prove two typical cases `by hand,' leaving the rest of the 
proof to the reader. 

For our first case, let us suppose $n\geq 3$ and prove local exactness of
$$\Wedge^1\otimes E\xrightarrow{\,\nabla_\perp\,}\Wedge_\perp^2\otimes E
\xrightarrow{\,\nabla_\perp\,}\Wedge_\perp^3\otimes E.$$
Thus, we are required to show that if $\omega_{ab}$ has values in $E$ and
$$\textstyle\omega_{ab}=\omega_{[ab]}\qquad J^{ab}\omega_{ab}=0\qquad
\nabla_{[c}\omega_{de]}=\frac1{n-1}J^{ab}(\nabla_a\omega_{b[c})J_{de]},$$ 
then locally there is $\phi_{a}\in\Gamma(\Wedge^1\otimes E)$ such that
$$\textstyle\omega_{cd}
=\nabla_{[c}\phi_{d]}-\frac1{2n}J^{ab}(\nabla_a\phi_b)J_{cd}.$$
If we set $\psi_c\equiv-\frac1{n-1}J^{ab}\nabla_a\omega_{bc}$, then
$\nabla_{[c}\omega_{de]}+J_{[cd}\psi_{e]}=0$ so
$$0=\nabla_{[b}\nabla_c\omega_{de]}+J_{[bc}\nabla_d\psi_{e]}
=J_{[bc}\Theta\omega_{de]}+J_{[bc}\nabla_d\psi_{e]}$$
and since $J\wedge\underbar{\enskip}:\Wedge^2\to\Wedge^4$ is injective it 
follows that
$$\nabla_{[c}\psi_{d]}+\Theta\omega_{cd}=0.$$
In other words, we have shown that
$$\begin{array}{rcl}
\nabla\omega+J\wedge\psi&=&0\\
\nabla\psi+\Theta\omega&=&0\end{array}$$
and Lemma~\ref{key_lemma} locally yields
$\phi_a\in\Gamma(\Wedge^1\otimes E)$ and $\eta\in\Gamma(E)$ such that
$$\begin{array}{rcl}\nabla_{[a}\phi_{b]}-J_{ab}\eta&=&\omega_{ab}\\
\nabla_a\eta-\Theta\phi_a&=&\psi_a\end{array}$$
In particular, 
$$J^{ab}\nabla_a\phi_b-2n\eta=J^{ab}\big(\nabla_a\phi_b-J_{ab}\eta\big)
=J^{ab}\omega_{ab}=0$$
and, therefore,
$$\textstyle\nabla_{[c}\phi_{d]}-\frac1{2n}J^{ab}(\nabla_a\phi_b)J_{cd}
=\nabla_{[c}\phi_{d]}-\eta J_{cd}=\omega_{cd},$$
as required.

Our second case is more involved. It is to show that
\begin{equation}\label{complex}
\Wedge_\perp^n\otimes E\xrightarrow{\,\nabla_\perp^2-\frac2n\Theta\,}
\Wedge_\perp^n\otimes E\xrightarrow{\,\nabla_\perp\,}
\Wedge_\perp^{n-1}\otimes E\end{equation}
is locally exact. As regards
$\nabla_\perp:\Wedge_\perp^n\otimes E\to\Wedge_\perp^{n-1}\otimes E$,
notice that 
$$\textstyle J^{bc}\nabla_b\psi_{cdef\cdots g}
=\frac{n+1}2J^{bc}\nabla_{[b}\psi_{cdef\cdots g]}$$
and that if $\phi_{def\cdots g}\in\Gamma(\Wedge^{k}\otimes E)$, then
\begin{equation}\label{combinatorics}
\textstyle J^{bc}J_{[bc}\phi_{def\cdots g]}=
\frac{4(n-k)}{(k+1)(k+2)}\phi_{def\cdots g}+
\frac{k(k-1)}{(k+1)(k+2)}J_{[de}\phi_{f\cdots g]bc}J^{bc}\end{equation}
so if $\phi_{def\cdots g}\in\Gamma(\Wedge_\perp^{n-1}\otimes E)$, then
$$\textstyle J^{bc}J_{[bc}\phi_{def\cdots g]}=
\frac4{n(n+1)}\phi_{def\cdots g}.$$
Therefore, $\nabla_\perp\psi\in\Gamma(\Wedge_\perp^{n-1}\otimes E)$ is
characterised by
\begin{equation}\label{trick}
\textstyle J\wedge\nabla_\perp\psi=\frac2n\nabla\psi\end{equation}
as an equation in $\Wedge^{n+1}\otimes E$. In particular, in 
$\Wedge^{n+2}\otimes E$ we find
$$\textstyle
J\wedge\nabla\nabla_\perp\psi
=\nabla(J\wedge\nabla_\perp\psi)=\frac2n\nabla^2\psi
=J\wedge\Theta\psi=0$$
whence $\nabla\nabla_\perp\psi$ already lies in $\Wedge^n\otimes E$ and there
is no need to remove the trace as in (\ref{early}) to form
$\nabla_\perp^2\psi$. Therefore, invoking (\ref{trick}) once again, the
composition
$$\Wedge_\perp^n\otimes E\xrightarrow{\,\nabla_\perp\,}
\Wedge_\perp^{n-1}\otimes E
\xrightarrow{\,\nabla_\perp\,}\Wedge_\perp^n\otimes E
\xrightarrow{\,\nabla_\perp\,}\Wedge_\perp^{n-1}\otimes E$$
is characterised by
$$\textstyle J\wedge\nabla_\perp^3\psi=\frac2n\nabla\nabla_\perp^2\psi
=\frac2n\nabla^2\nabla_\perp\psi=\frac2nJ\wedge\Theta\nabla_\perp\psi
=\frac2nJ\wedge\nabla_\perp\Theta\psi$$
and, since $J\wedge\underbar{\enskip}:\Wedge^{n-1}\to\Wedge^{n+1}$ is an 
isomorphism, we conclude that 
$\nabla_\perp^3\psi=\frac2n\nabla_\perp\Theta\psi$, equivalently that 
(\ref{complex}) is a complex. 

Before proceeding, let us remark on another consequence of 
(\ref{combinatorics}), namely that for 
$\nu_{cdef\cdots g}\in\Gamma(\Wedge^n\otimes E)$,
\begin{equation}\label{algebra}
J_{[ab}\nu_{cdef\cdots g]}=0\iff
J^{cd}\nu_{cdef\cdots g}=0.
\end{equation}
Now to establish local exactness, suppose
$\nu\in\Gamma(\Wedge_\perp^n\otimes E)$ satisfies $\nabla_\perp\nu=0$.
Equivalently, according to (\ref{trick}) and~(\ref{algebra})
$$\nu\in\Gamma(\Wedge^n\otimes E)\quad\mbox{satisfies}\enskip 
\nabla\nu=0\enskip\mbox{and}\enskip J\wedge\nu=0.$$
Lemma~\ref{key_lemma} implies that locally there are 
$$\begin{array}{l}\phi\in\Gamma(\Wedge^n\otimes E)\\
\eta\in\Gamma(\Wedge^{n-1}\otimes E)\end{array}\enskip\mbox{such that}\enskip
\begin{array}{rcl}\nabla\phi-J\wedge\eta&=&0\\
\nabla\eta-\Theta\phi&=&\nu.\end{array}$$ 
Since 
$$0\to\Wedge^{n-2}\xrightarrow{\,J\wedge\underbar{\enskip}\,}\Wedge^n
\to\Wedge_\perp^n\to0$$
is exact, we can write $\phi$ uniquely as 
$$\phi=\psi+J\wedge\tau,$$ where 
$\psi\in\Gamma(\Wedge_\perp^n\otimes E)$ and 
$\tau\in\Gamma(\Wedge^{n-2}\otimes E)$. 
We conclude that
$$\begin{array}{rcl}\nabla\psi-J\wedge\hat\eta&=&0\\
\nabla\hat\eta-\Theta\psi&=&\nu,\end{array}\enskip\mbox{(where}\enskip
\hat\eta=\eta-\nabla\tau).$$ 
However, as discussed above, these equations say exactly that
$$\textstyle\nabla_\perp^2\psi-\frac2n\Theta\psi=\nu,$$
and exactness is shown.
%
%
\hfill$\square$

\section{Tractor bundles}\label{tractors}\label{tractor_bundles}
For the rest of the article we suppose that we are given, not only a manifold 
$M$ with symplectic form~$J_{ab}$, but also a torsion-free connection 
$\nabla_a$ on the tangent bundle (and hence on all other tensor bundles) such 
that $\nabla_aJ_{bc}=0$. This is sometimes called a {\em Fedosov 
structure}~\cite{GRS} on~$M$.
The curvature $R_{ab}{}^c{}_d$ of~$\nabla_a$, characterised by
$$(\nabla_a\nabla_b-\nabla_b\nabla_a)X^c=R_{ab}{}^c{}_dX^d,$$
satisfies
$$R_{ab}{}^c{}_d=R_{[ab]}{}^c{}_d\qquad R_{[ab}{}^c{}_{d]}=0\qquad
R_{ab}{}^c{}_dJ_{ce}=R_{ab}{}^c{}_eJ_{cd}$$
and enjoys the following decomposition into irreducible parts
$$R_{ab}{}^c{}_d=V_{ab}{}^c{}_d+\delta_a{}^c\Phi_{bd}-\delta_b{}^c\Phi_{ad}
+J_{ad}\Phi_{be}J^{ce}-J_{bd}\Phi_{ae}J^{ce}+2J_{ab}\Phi_{de}J^{ce},$$
for some symmetric~$\Phi_{ab}$, where $V_{ab}{}^a{}_d=0$ (reflecting 
the branching
$$\begin{picture}(15,10)
\put(0,0){\line(1,0){5}}
\put(0,5){\line(1,0){15}}
\put(0,10){\line(1,0){15}}
\put(0,0){\line(0,1){10}}
\put(5,0){\line(0,1){10}}
\put(10,5){\line(0,1){5}}
\put(15,5){\line(0,1){5}}
\end{picture}\enskip=\enskip\begin{picture}(17,10)
\put(0,0){\line(1,0){5}}
\put(0,5){\line(1,0){15}}
\put(0,10){\line(1,0){15}}
\put(0,0){\line(0,1){10}}
\put(5,0){\line(0,1){10}}
\put(10,5){\line(0,1){5}}
\put(15,5){\line(0,1){5}}
\put(17,3){\makebox(0,0){$\scriptstyle\perp$}}
\end{picture}\enskip\oplus\enskip\begin{picture}(10,5)
\put(0,0){\line(1,0){10}}
\put(0,5){\line(1,0){10}}
\put(0,0){\line(0,1){5}}
\put(5,0){\line(0,1){5}}
\put(10,0){\line(0,1){5}}
\end{picture}$$
of representations under
${\mathrm{GL}}(2n,{\mathbb{R}})\supset{\mathrm{Sp}}(2n,{\mathbb{R}})$).
Notice that
\begin{equation}\label{Phi}\textstyle\Phi_{bd}=\frac1{2(n+1)}R_{ab}{}^a{}_d
=\frac1{4(n+1)}J^{ae}R_{ae}{}^c{}_bJ_{cd}.\end{equation}
We define the {\em standard tractor bundle\/} to be the rank $2n+2$ vector
bundle ${\T}\equiv\Wedge^0\oplus\Wedge^1\oplus\Wedge^0$ with its 
{\em tractor connection\/}
$$\textstyle\nabla_a\!
\left[\!\begin{array}c\sigma\\ \mu_b\\ \rho\end{array}\!\right]=
\left[\!\begin{array}c\nabla_a\sigma-\mu_a\\
\nabla_a\mu_b+J_{ab}\rho+\Phi_{ab}\sigma\\ 
\nabla_a\rho-\Phi_{ab}J^{bc}\mu_c+S_a\sigma
\end{array}\!\right]\!,\enskip\mbox{where}\enskip
S_a\equiv\frac1{2n+1}J^{bc}\nabla_c\Phi_{ab}.$$
Readers familiar with conformal differential geometry may recognise the form of
this connection as following the tractor connection in that setting~\cite{BEG}.
If needs be, we shall write {\em symplectic tractor connection\/} to
distinguish the connection just defined from any alternatives. We shall need 
the following curvature identities.
\begin{lemma}\label{curvature_identities}
Let $Y_{abc}\equiv\frac1{2n+1}\nabla_cV_{ab}{}^c{}_d$. Then
\begin{equation}\label{contracted_Bianchi}
Y_{abc}=2\nabla_{[a}\Phi_{b]c}-2J_{c[a}S_{b]}+2J_{ab}S_c\end{equation}
and
\begin{equation}\label{nablaY}
\begin{array}{rcr}J^{ad}\nabla_aY_{bcd}
&=&J^{ad}V_{bc}{}^e{}_a\Phi_{ed}
+4n(J^{ad}\Phi_{ba}\Phi_{cd}-\nabla_{[b}S_{c]})\qquad\\[3pt]
&&{}+2J_{bc}J^{ad}(\nabla_aS_d-J^{ef}\Phi_{ae}\Phi_{df}).
\end{array}\end{equation}
\end{lemma}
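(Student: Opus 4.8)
The plan is to derive both identities in Lemma~\ref{curvature_identities} directly from the Bianchi identities for the curvature $R_{ab}{}^c{}_d$, using the irreducible decomposition given just above together with the formulae~\eqref{Phi} for $\Phi$ and the definition of $S_a$. For \eqref{contracted_Bianchi}, I would start from the second Bianchi identity $\nabla_{[a}R_{bc]}{}^e{}_d=0$, contract appropriately (i.e.\ trace over $e$ and one of the antisymmetrized lower indices, exploiting $R_{ab}{}^c{}_dJ_{ce}=R_{ab}{}^c{}_eJ_{cd}$), and substitute the decomposition of $R$ into $V$ and $\Phi$-terms. The left-hand side produces $Y_{abc}=\frac1{2n+1}\nabla_cV_{ab}{}^c{}_d$ after using $V_{ab}{}^a{}_d=0$, while the $\Phi$-terms assemble into $\nabla_{[a}\Phi_{b]c}$ plus trace corrections; the $J$-contractions of $\nabla\Phi$ are precisely what $S_a$ was defined to package, so the right-hand side of \eqref{contracted_Bianchi} should fall out once one is careful with combinatorial coefficients. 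This is the kind of bookkeeping that is routine but error-prone, so I would organize it by first recording the independent traces of the decomposition and of its covariant derivative.

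For \eqref{nablaY}, the strategy is to apply $J^{ad}\nabla_a$ to the already-established identity \eqref{contracted_Bianchi} and simplify. Differentiating the right-hand side of \eqref{contracted_Bianchi} gives $2J^{ad}\nabla_a\nabla_{[b}\Phi_{d]c}-2J^{ad}\nabla_a(J_{c[b}S_{d]})+2J^{ad}\nabla_a(J_{bd}S_c)$. The terms where $\nabla_a$ hits $J$ vanish since $\nabla_aJ_{bc}=0$, so everything reduces to second covariant derivatives of $\Phi$ and first covariant derivatives of $S$. The second-derivative terms on $\Phi$ must be Ricci-commuted: $J^{ad}(\nabla_a\nabla_b-\nabla_b\nabla_a)\Phi_{dc}$ produces curvature contractions $R_{ab}{}^e{}_d\Phi_{ec}J^{ad}$ and $R_{ab}{}^e{}_c\Phi_{de}J^{ad}$, which I then expand using the decomposition of $R$. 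Collecting, the $V$-part gives the $J^{ad}V_{bc}{}^e{}_a\Phi_{ed}$ term, while the $\Phi$-against-$\Phi$ contractions and the $\nabla S$ contractions reorganize into the stated combination $4n(J^{ad}\Phi_{ba}\Phi_{cd}-\nabla_{[b}S_{c]})+2J_{bc}J^{ad}(\nabla_aS_d-J^{ef}\Phi_{ae}\Phi_{df})$. The trace term along $J_{bc}$ arises naturally because $J^{ad}\nabla_aY_{bcd}$ is not trace-free in $bc$, unlike $Y_{bcd}$ itself, so one should anticipate and track that piece carefully.

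The main obstacle I expect is purely combinatorial: keeping the numerous $J^{ab}$-contractions, the antisymmetrizations, and the rational coefficients straight while repeatedly substituting the five-term decomposition of $R$ and the definitions~\eqref{Phi} and of $S_a$. In particular, verifying that the coefficient $2n+1$ in the definition of $Y_{abc}$ and in $S_a$ conspire correctly, and that the cross-terms between the $\delta$-parts and the $J$-parts of the decomposition cancel or combine as claimed, requires disciplined index manipulation. A useful sanity check along the way is to trace both sides of \eqref{nablaY} over $b$ and $c$ (or contract with $J^{bc}$) and confirm consistency with independently computed scalar Bianchi-type identities for $\Phi$ and $S$; this would catch coefficient slips before they propagate. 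I would also double-check \eqref{contracted_Bianchi} by contracting it with $J^{ab}$ and with $J^{ac}$ to recover the defining relation for $S_a$ and the contracted form, respectively.
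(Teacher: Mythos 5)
Your plan coincides with the paper's proof: \eqref{contracted_Bianchi} is obtained there by substituting the curvature decomposition into $\nabla_{[a}R_{bc]}{}^d{}_e=0$ and contracting over ${}_a{}^d$, and \eqref{nablaY} by applying $J^{ad}\nabla_a$ to \eqref{contracted_Bianchi}, Ricci-commuting the second derivatives of $\Phi$ (using $J^{ad}R_{ab}{}^e{}_d=2(n+1)J^{ed}\Phi_{bd}$ and $R_{a[b}{}^e{}_{c]}=-\frac12R_{bc}{}^e{}_a$), and expanding the resulting curvature contraction via the decomposition. The remaining work is exactly the coefficient bookkeeping you identify, and your proposed consistency checks are sound.
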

\begin{proof} Writing the Bianchi identity $\nabla_{[a}R_{bc]}{}^d{}_e=0$ in 
terms of $V_{ab}{}^c{}_d$ and $\Phi_{ab}$ yields
$$\nabla_{[a}V_{bc]}{}^d{}_e=-2\delta_{[b}{}^d\nabla_a\Phi_{c]e}
+2J^{df}J_{e[b}\nabla_a\Phi_{c]f}-2J^{df}J_{[bc}\nabla_{a]}\Phi_{ef}.$$
and contracting over ${}_a{}^d$ gives
$$\begin{array}{rcr}
\frac13\nabla_aV_{bc}{}^a{}_e
&=&\frac{4(n-1)}3\nabla_{[b}\Phi_{c]e}
+\frac23\big[\nabla_{[b}\Phi_{c]e}-(2n+1)J_{e[b}S_{c]}\big]\qquad\\[3pt]
&&{}+\frac23\big[(2n+1)J_{bc}S_e+2\nabla_{[b}\Phi_{c]e}\big],
\end{array}$$
which is easily rearranged as~(\ref{contracted_Bianchi}).
For (\ref{nablaY}), firstly notice that
$$J^{ad}R_{ab}{}^e{}_d=J^{ed}R_{ab}{}^a{}_d=2(n+1)J^{ed}\Phi_{bd}$$
and the Bianchi symmetry may be written as 
$R_{a[b}{}^e{}_{c]}=-\frac12R_{bc}{}^e{}_a$. Thus,
$$\begin{array}{rcl}J^{ad}\nabla_a\nabla_b\Phi_{cd}
&\!\!=\!\!&\nabla_bJ^{ad}\nabla_a\Phi_{cd}
-J^{ad}R_{ab}{}^e{}_c\Phi_{ed}-J^{ad}R_{ab}{}^e{}_d\Phi_{ce}\\[3pt]
&\!\!=\!\!&-(2n+1)\nabla_bS_c
-J^{ad}R_{ab}{}^e{}_c\Phi_{ed}+2(n+1)J^{de}\Phi_{bd}\Phi_{ce}
\end{array}$$
and so
$$\textstyle J^{ad}\nabla_a\nabla_{[b}\Phi_{c]d}=-(2n+1)\nabla_{[b}S_{c]}
+\frac12J^{ad}R_{bc}{}^e{}_a\Phi_{ed}+2(n+1)J^{de}\Phi_{bd}\Phi_{ce}.$$
{From} (\ref{contracted_Bianchi}) we see that
$$J^{ad}\nabla_aY_{bcd}=2J^{ad}\nabla_a\nabla_{[b}\Phi_{c]d}
+2\nabla_{[b}S_{c]}+2J_{bc}J^{ad}\nabla_aS_d.$$
Therefore,
$$J^{ad}\nabla_aY_{bcd}
=J^{ad}R_{bc}{}^e{}_a\Phi_{ed}-4n\nabla_{[b}S_{c]}
+4(n+1)J^{de}\Phi_{bd}\Phi_{ce}+2J_{bc}J^{ad}\nabla_aS_d.$$
Finally, 
$$J^{ad}R_{bc}{}^e{}_a\Phi_{ed}
=J^{ad}V_{bc}{}^e{}_a\Phi_{ed}
-4J^{ad}\Phi_{ba}\Phi_{cd}
-2J_{bc}J^{ad}J^{ef}\Phi_{ae}\Phi_{df},$$
so
$$\begin{array}{rcl}J^{ad}\nabla_aY_{bcd}
&=&J^{ad}V_{bc}{}^e{}_a\Phi_{ed}
+4nJ^{ad}\Phi_{ba}\Phi_{cd}
-2J_{bc}J^{ad}J^{ef}\Phi_{ae}\Phi_{df}\\[3pt]
&&\quad{}-4n\nabla_{[b}S_{c]}
+2J_{bc}J^{ad}\nabla_aS_d,
\end{array}$$
which may be rearranged as~(\ref{nablaY}).
\end{proof}
\begin{prop}\label{tractor_curvature} The
tractor connection ${\T}\to\Wedge^1\otimes{\T}$ preserves the
non-degenerate skew form
$$\left\langle\left[\!\begin{array}c\sigma\\ \mu_b\\ \rho\end{array}\!\right],
\left[\!\begin{array}c\tilde\sigma\\ \tilde\mu_c\\ 
\tilde\rho\end{array}\!\right]\right\rangle\equiv
\sigma\tilde\rho+J^{bc}\mu_b\tilde\mu_c-\rho\tilde\sigma$$
and its curvature is given by
$$\setlength{\arraycolsep}{1pt}\begin{array}{rcl}
(\nabla_a\nabla_a-\nabla_b\nabla_a)\!\!
\left[\!\begin{array}c\sigma\\ \mu_d\\ \rho\end{array}\!\right]
&=&\left[\!\begin{array}{c}0\\
-V_{ab}{}^c{}_d\mu_c
+Y_{abd}\sigma\\ 
-Y_{abc}J^{cd}\mu_d
+\frac1{2n}(J^{cd}V_{ab}{}^e{}_c\Phi_{de}-J^{cd}\nabla_cY_{abd})\sigma
\end{array}\!\right]\\[20pt]
&&+2J_{ab}\!\left[\!\begin{array}{c}
\rho\\ J^{ce}\Phi_{cd}\mu_e-S_d\sigma\\ S_cJ^{cd}\mu_d
+\frac1{2n}J^{cd}(\nabla_cS_d-J^{ef}\Phi_{ce}\Phi_{df})\sigma
\end{array}\!\right]\!\!.
\end{array}$$
\end{prop}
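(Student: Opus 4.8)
The plan is to verify the two assertions of Proposition~\ref{tractor_curvature}---preservation of the skew form and the curvature formula---by direct computation from the definition of the tractor connection, organising the work so that the curvature identities of Lemma~\ref{curvature_identities} are exactly what is needed at the end.

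\emph{Preservation of the skew form.} First I would compute $\nabla_a\langle(\sigma,\mu_b,\rho),(\tilde\sigma,\tilde\mu_c,\tilde\rho)\rangle$ by the Leibniz rule and substitute the three components of $\nabla_a$ on each slot. The terms group into those built from $\sigma,\tilde\sigma,\rho,\tilde\rho$, the $\Phi$-terms, and the $S$-terms. The $\Phi_{ab}$ contributions cancel because $\Phi$ is symmetric while it gets contracted against $J^{bc}$ in one place and appears bare in another; similarly the $S_a\sigma\tilde\rho$ term is killed by $-S_a\tilde\sigma\rho$ after using skew-symmetry of the pairing, and the $J_{ab}\rho$ terms pair against the $-\mu_a$ terms. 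This is routine bookkeeping and I would present only the cancellation pattern, not every line.

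\emph{The curvature formula.} The substantive part is to apply $(\nabla_a\nabla_b-\nabla_b\nabla_a)$ to $(\sigma,\mu_d,\rho)$ and read off the three components. The $\sigma$-component vanishes identically because the top slot of $\nabla_a$ involves only first derivatives of $\sigma$ and the algebraically symmetric combination $\nabla_a\mu_b$ via $-\mu_a$, so the commutator on it is zero---this needs only that $\nabla_a$ is torsion-free. For the $\mu_d$-component, commuting derivatives produces the curvature action $R_{ab}{}^c{}_d\mu_c$ on the middle slot, plus cross-terms from the $\Phi$ and $J$ and $S$ pieces of the connection acting on $\sigma$; after decomposing $R_{ab}{}^c{}_d$ into its $V$-part and its $\Phi$-part via the displayed formula in \S\ref{tractors}, the $\Phi$-part of $R$ recombines with the cross-terms to produce precisely the $2J_{ab}$ bracket, while the $V$-part and the terms $\nabla_{[a}\Phi_{b]d}$, $J_{d[a}S_{b]}$, $J_{ab}S_d$ coming from the bottom slot assemble into $-V_{ab}{}^c{}_d\mu_c+Y_{abd}\sigma$ by the very definition \eqref{contracted_Bianchi} of $Y_{abc}$. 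The bottom component is the hardest: here the commutator hits $\rho$ (giving $2J_{ab}\Theta$-like terms, ultimately the $2J_{ab}\rho$ entry), hits $\mu_c$ through $-\Phi_{ab}J^{bc}$, and hits $\sigma$ through both $S_a$ and the earlier $\Phi$ insertions, so one gets terms $\nabla_{[a}S_{b]}\sigma$, $J^{cd}\nabla_a\Phi_{b]d}\,\text{-type}$ contractions, and $\Phi\Phi$ contractions. These are reorganised using \eqref{contracted_Bianchi} to introduce $Y_{abc}J^{cd}\mu_d$, and then \eqref{nablaY} is invoked to convert the remaining $J^{cd}\nabla_c Y_{abd}$, $J^{cd}V_{ab}{}^e{}_c\Phi_{de}$, $J^{ad}\Phi\Phi$ and $\nabla_{[a}S_{b]}$ combination into exactly the $\sigma$-coefficient displayed, with the leftover $J_{bc}$-proportional piece of \eqref{nablaY} feeding the bottom entry of the $2J_{ab}$ bracket.

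\emph{Main obstacle.} The hard part is the bottom-slot computation: there are many $\Phi\Phi$ and $\nabla S$ and $\nabla\Phi$ terms with delicate index contractions against several copies of $J$, and the split into the ``$V,Y$'' part and the ``$2J_{ab}$'' part is not visible until one has correctly applied both \eqref{contracted_Bianchi} and \eqref{nablaY}---indeed the whole point of Lemma~\ref{curvature_identities} is to make this final collapse work, so the key is to keep the intermediate expression in a form where those two identities apply verbatim. I would also double-check the normalisation $\tfrac1{2n}$ in the $\sigma$-coefficients by contracting the $\mu_d$-component result with $J^{ab}$ in the sense of \eqref{Theta_in_the_symplectically_flat_case}, which both fixes the constant and serves as an internal consistency check on the sign conventions.
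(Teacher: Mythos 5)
Your overall strategy is exactly the paper's: expand the commutator directly, organise the middle slot via $T_{abc}=\nabla_{[a}\Phi_{b]c}-J_{c[a}S_{b]}$, and collapse everything using the two identities of Lemma~\ref{curvature_identities}; the skew-form verification is likewise the same term-by-term cancellation. However, there is one concrete error in the sketch: the claim that the $\sigma$-component (top slot) of the commutator ``vanishes identically \ldots this needs only that $\nabla_a$ is torsion-free'' is false, and it contradicts the very formula you are proving, whose top slot is $2J_{ab}\rho$ (the $\rho$ entry of the $2J_{ab}[\cdots]$ bracket, which is moreover essential later for $\Theta$ to be invertible on ${\mathbb{CP}}_n$). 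Torsion-freeness only kills $\nabla_{[a}\nabla_{b]}\sigma$; the two copies of $\nabla_{[a}\mu_{b]}$ cancel each other and $\Phi_{[ab]}\sigma=0$ by symmetry, but the $J_{bd}\rho$ entry of the middle slot of the connection is fed back into the top slot by the $-\mu_a$ entry and survives, giving $-J_{ba}\rho=J_{ab}\rho$ before doubling. Your later remark that the ``$2J_{ab}\rho$ entry'' arises in the bottom-slot computation misplaces this term (the bottom slot of the answer contains no $\rho$ at all), so the bookkeeping of which cross-terms land in which slot needs to be redone. A similar looseness appears in the skew-form argument: the two $S$-terms that cancel are $+S_a\sigma\tilde\sigma$ and $-S_a\sigma\tilde\sigma$, not $S_a\sigma\tilde\rho$ against $S_a\tilde\sigma\rho$. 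None of this requires a new idea --- the computation corrects itself once actually carried out, and the treatment of the middle and bottom slots via \eqref{contracted_Bianchi} and \eqref{nablaY} is exactly right --- but as written the top component of the curvature is simply wrong.
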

\begin{proof}We expand
$$\left\langle
\nabla_a\!\left[\!\begin{array}c\sigma\\ \mu_b\\ \rho\end{array}\!\right],
\left[\!\begin{array}c\tilde\sigma\\ \tilde\mu_c\\ 
\tilde\rho\end{array}\!\right]\right\rangle+\left\langle
\left[\!\begin{array}c\sigma\\ \mu_b\\ \rho\end{array}\!\right],
\nabla_a\!\left[\!\begin{array}c\tilde\sigma\\ \tilde\mu_c\\ 
\tilde\rho\end{array}\!\right]\right\rangle$$
to obtain
$$\begin{array}{l}(\nabla_a\sigma-\mu_a)\tilde\rho
+\sigma(\nabla\tilde\rho-\Phi_{ab}J^{bc}\tilde\mu_c+S_a\tilde\sigma)\\
\enskip{}+J^{bc}(\nabla_a\mu_b+J_{ab}\rho+\Phi_{ab}\sigma)\tilde\mu_c
+J^{bc}\mu_b(\nabla_a\tilde\mu_c+J_{ac}\tilde\rho+\Phi_{ac}\tilde\sigma)\\
\quad{}-(\nabla_a\rho-\Phi_{ab}J^{bc}\mu_c+S_a\sigma)\tilde\sigma
-\rho(\nabla_a\tilde\sigma-\tilde\mu_a)\end{array}$$
in which all terms cancel save for 
$$(\nabla_a\sigma)\tilde\rho
+\sigma\nabla\tilde\rho
+J^{bc}(\nabla_a\mu_b)\tilde\mu_c
+J^{bc}\mu_b\nabla_a\tilde\mu_c
-(\nabla_a\rho)\tilde\sigma
-\rho\nabla_a\tilde\sigma,$$
which reduces to 
$$\nabla_a\big(\sigma\tilde\rho+J^{bc}\mu_b\tilde\mu_c-\rho\tilde\sigma\big),$$
as required. For the curvature, we readily compute
$$\nabla_{[a}\nabla_{b]}\!
\left[\!\begin{array}c\sigma\\ \mu_d\\ \rho\end{array}\!\right]
=\left[\!\begin{array}l\nabla_{[a}\nabla_{b]}\sigma-J_{ba}\rho\\
\nabla_{[a}\nabla_{b]}\mu_d
+J_{d[a}\Phi_{b]c}J^{ce}\mu_e
-\Phi_{d[a}\mu_{b]}
+T_{abd}\sigma\\ 
\nabla_{[a}\nabla_{b]}\rho-
T_{abc}J^{cd}\mu_d
+(\nabla_{[a}S_{b]}-J^{cd}\Phi_{ac}\Phi_{bd})\sigma
\end{array}\!\right],$$
where $T_{abc}\equiv\nabla_{[a}\Phi_{b]c}-J_{c[a}S_{b]}$. 
Lemma~\ref{curvature_identities}, 
however, states that
$$\textstyle T_{abc}=\frac12Y_{abc}-J_{ab}S_c$$
and 
$$\begin{array}{rcl}4n(\nabla_{[a}S_{b]}-J^{cd}\Phi_{ac}\Phi_{bd})
&=&J^{cd}V_{ab}{}^e{}_c\Phi_{de}-J^{cd}\nabla_cY_{abd}\\[3pt]
&&{}+2J_{ab}J^{cd}(\nabla_cS_d-J^{ef}\Phi_{ce}\Phi_{df}).
\end{array}$$
Therefore,
$$\begin{array}{rcl}\nabla_{[a}\nabla_{b]}\!
\left[\!\begin{array}c\sigma\\ \mu_d\\ \rho\end{array}\!\right]
&=&\left[\!\begin{array}{c}0\\
\nabla_{[a}\nabla_{b]}\mu_d
+J_{d[a}\Phi_{b]c}J^{ce}\mu_e
-\Phi_{d[a}\mu_{b]}
+\frac12Y_{abd}\sigma\\ 
-\frac12Y_{abc}J^{cd}\mu_d
+\frac1{4n}(J^{cd}V_{ab}{}^e{}_c\Phi_{de}-J^{cd}\nabla_cY_{abd})\sigma
\end{array}\!\right]\\[20pt]
&&\enskip{}+J_{ab}\!\left[\!\begin{array}{c}
\rho\\ -S_d\sigma\\ S_cJ^{cd}\mu_d
+\frac1{2n}J^{cd}(\nabla_cS_d-J^{ef}\Phi_{ce}\Phi_{df})\sigma
\end{array}\!\right].
\end{array}$$
Finally, 
$$R_{ab}{}^c{}_d\mu_c=V_{ab}{}^c{}_d\mu_c-2\Phi_{d[a}\mu_{b]}
+2J_{d[a}\Phi_{b]c}J^{ce}\mu_e
+2J_{ab}\Phi_{de}J^{ce}\mu_c,$$
so
$$\textstyle\nabla_{[a}\nabla_{b]}\mu_d
+J_{d[a}\Phi_{b]c}J^{ce}\mu_e
-\Phi_{d[a}\mu_{b]}
=-\frac12V_{ab}{}^c{}_d\mu_c-J_{ab}\Phi_{de}J^{ce}\mu_c$$
whence
$$\begin{array}{rcl}\nabla_{[a}\nabla_{b]}\!
\left[\!\begin{array}c\sigma\\ \mu_d\\ \rho\end{array}\!\right]
&=&\left[\!\begin{array}{c}0\\
-\frac12V_{ab}{}^c{}_d\mu_c
+\frac12Y_{abd}\sigma\\ 
-\frac12Y_{abc}J^{cd}\mu_d
+\frac1{4n}(J^{cd}V_{ab}{}^e{}_c\Phi_{de}-J^{cd}\nabla_cY_{abd})\sigma
\end{array}\!\right]\\[20pt]
&&\enskip{}+J_{ab}\!\left[\!\begin{array}{c}
\rho\\ J^{ce}\Phi_{cd}\mu_e-S_d\sigma\\ S_cJ^{cd}\mu_d
+\frac1{2n}J^{cd}(\nabla_cS_d-J^{ef}\Phi_{ce}\Phi_{df})\sigma
\end{array}\!\right],
\end{array}$$
as required.
\end{proof}

\begin{cor}\label{symplectically_flat_tractors}
The tractor connection is symplectically flat if and only if the curvature
tensor $V_{ab}{}^c{}_d$ vanishes.
\end{cor}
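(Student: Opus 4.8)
The plan is to read off the claim directly from the curvature formula in Proposition~\ref{tractor_curvature}. Recall that, by definition~\eqref{Theta_in_the_symplectically_flat_case}, the tractor connection is symplectically flat precisely when its curvature $\nabla_{[a}\nabla_{b]}$, applied to any tractor, is of the form $J_{ab}$ times an endomorphism of~$\T$ — equivalently, when the part of the curvature \emph{not} proportional to~$J_{ab}$ vanishes identically.

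Looking at the displayed formula in Proposition~\ref{tractor_curvature}, the curvature is written as a sum of two terms: a first bracket and $2J_{ab}$ times a second bracket. The second term is manifestly a multiple of~$J_{ab}$, so it is automatically of the symplectically flat type and contributes nothing to the obstruction. Hence the tractor connection is symplectically flat if and only if the first bracket vanishes for all~$\sigma,\mu_d,\rho$. That bracket has three slots: the top slot is already zero; the middle slot is $-V_{ab}{}^c{}_d\mu_c+Y_{abd}\sigma$; and the bottom slot is $-Y_{abc}J^{cd}\mu_d+\frac1{2n}(J^{cd}V_{ab}{}^e{}_c\Phi_{de}-J^{cd}\nabla_cY_{abd})\sigma$. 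So vanishing of the first bracket is equivalent to the simultaneous vanishing of $V_{ab}{}^c{}_d\mu_c-Y_{abd}\sigma$ and of $Y_{abc}J^{cd}\mu_d-\frac1{2n}(J^{cd}V_{ab}{}^e{}_c\Phi_{de}-J^{cd}\nabla_cY_{abd})\sigma$ for arbitrary $\sigma$ and~$\mu_c$.

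First I would observe that if $V_{ab}{}^c{}_d=0$, then $Y_{abc}=\frac1{2n+1}\nabla_cV_{ab}{}^c{}_d=0$ as well, so both slots vanish trivially and the connection is symplectically flat — this is the easy direction. For the converse, set $\sigma=0$: then vanishing of the middle slot forces $V_{ab}{}^c{}_d\mu_c=0$ for every $\mu_c$, hence $V_{ab}{}^c{}_d=0$. (The bottom slot then automatically vanishes too, so there is nothing further to check, and in particular one does not even need to examine the $\sigma$-dependent terms.) So the whole argument reduces to this one-line observation, and the proof is essentially a matter of pointing at the right piece of the formula already established.

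The main obstacle, such as it is, is purely bookkeeping: one must make sure that the decomposition in Proposition~\ref{tractor_curvature} genuinely separates the ``$J_{ab}$-proportional'' part from the rest, i.e.\ that no hidden multiple of $J_{ab}$ lurks inside the first bracket that could be traded against a $V$-term. Since the first bracket is built solely from $V_{ab}{}^c{}_d$ and $Y_{abc}$ (and $Y$ is a derivative of~$V$), and neither of these is itself a multiple of $J_{ab}$ — indeed $V_{ab}{}^a{}_d=0$ by construction, whereas $J_{ab}$ has nonzero such trace — this separation is clean, and the corollary follows. I would state the proof in two or three sentences accordingly.

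\begin{proof}
By~\eqref{Theta_in_the_symplectically_flat_case}, the tractor connection is
symplectically flat exactly when $\nabla_{[a}\nabla_{b]}$ acting on any tractor
is a multiple of~$J_{ab}$. In the curvature formula of
Proposition~\ref{tractor_curvature}, the term $2J_{ab}(\,\cdots)$ is of this
form, so symplectic flatness is equivalent to the vanishing, for all
$\sigma,\mu_d,\rho$, of the first bracket
$$\left[\!\begin{array}{c}0\\
-V_{ab}{}^c{}_d\mu_c+Y_{abd}\sigma\\
-Y_{abc}J^{cd}\mu_d
+\frac1{2n}(J^{cd}V_{ab}{}^e{}_c\Phi_{de}-J^{cd}\nabla_cY_{abd})\sigma
\end{array}\!\right].$$
If $V_{ab}{}^c{}_d=0$ then $Y_{abc}=\frac1{2n+1}\nabla_cV_{ab}{}^c{}_d=0$ as
well, so this bracket vanishes identically and the connection is symplectically
flat. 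Conversely, if the bracket vanishes for all tractors, then taking
$\sigma=0$ gives $V_{ab}{}^c{}_d\mu_c=0$ for every $\mu_c$, hence
$V_{ab}{}^c{}_d=0$.
\end{proof}
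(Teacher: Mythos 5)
Your proposal is correct and is exactly how the paper intends the corollary to be read: it is stated without separate proof as an immediate consequence of the curvature formula in Proposition~\ref{tractor_curvature}, with the converse following from the fact that $V_{ab}{}^c{}_d$ is trace-free (so no multiple of $J_{ab}$ can hide in the first bracket) together with $Y_{abc}$ being a contraction of $\nabla V$. Your remark addressing that last point is the only substantive thing to check, and you handle it correctly.
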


\section{K\"ahler geometry}
K\"ahler manifolds provide a familiar source of symplectic manifolds equipped
with a compatible torsion-free connection as in~\S\ref{tractors}. In this case,
the connection $\nabla_a$ is the Levi-Civita connection of a metric $g_{ab}$
and $J_a{}^b\equiv J_{ac}g^{bc}$ is an almost complex structure on~$M$ whose
integrability is equivalent to the vanishing of~$\nabla_aJ_{bc}$. In K\"ahler
geometry, the Riemann curvature tensor decomposes into three irreducible parts:
\begin{equation}\label{Bochner_in_real_money}
\begin{array}{l}
R_{ab}{}^c{}_d=U_{ab}{}^c{}_d\\
\enskip{}+\delta_a{}^c\Xi_{bd}-\delta_b{}^c\Xi_{ad}
-g_{ad}\Xi_b{}^c+g_{bd}\Xi_a{}^c\\
\quad{}+J_a{}^c\Sigma_{bd}
-J_b{}^c\Sigma_{ad}
-J_{ad}\Sigma_b{}^c
+J_{bd}\Sigma_a{}^c
+2J_{ab}\Sigma^c{}_d
+2J^c{}_d\Sigma_{ab}\\
\enskip\quad{}+\Lambda(\delta_a{}^cg_{bd}-\delta_b{}^cg_{ad}
+J_a{}^cJ_{bd}
-J_b{}^cJ_{ad}
+2J_{ab}J^c{}_d),
\end{array}\end{equation}
where indices have been raised using $g^{ab}$ and
\begin{itemize}
\item $U_{ab}{}^c{}_d$ is totally trace-free with respect to
$g^{ab}$, $J_a{}^b$, and $J^{ab}$,
\item $\Xi_{ab}$ is trace-free symmetric whilst 
$\Sigma_{ab}\equiv J_a{}^c\Xi_{bc}$ is skew.
\end{itemize}
Computing the Ricci curvature from this decomposition, we find
$$R_{bd}\equiv R_{ab}{}^a{}_d=2(n+2)\Xi_{bd}+2(n+1)\Lambda g_{bd}$$
and therefore from (\ref{Phi}) conclude that
$$\textstyle\Phi_{ab}=\frac{n+2}{n+1}\Xi_{ab}+\Lambda g_{ab}.$$
Hence
$$\begin{array}{rcl}J_c{}^aR_{ab}{}^c{}_d
&=&J_c{}^aV_{ab}{}^c{}_d
-J_{bd}\Phi_a{}^a
-2J_b{}^a\Phi_{da}\\[3pt]
&=&J_c{}^aV_{ab}{}^c{}_d
-2\frac{n+2}{n+1}\Sigma_{bd}
-2(n+1)\Lambda J_{bd}.\end{array}$$
On the other hand, from (\ref{Bochner_in_real_money}) we find
$$J_c{}^aR_{ab}{}^c{}_d=
-2(n+2)\Sigma_{bd}-2(n+1)\Lambda J_{bd}$$
and, comparing these two expressions gives
$$\textstyle 
J_c{}^aV_{ab}{}^c{}_d-2\frac{n+2}{n+1}\Sigma_{bd}=-2(n+2)\Sigma_{bd}$$
and we have established the following.
\begin{prop}\label{Kaehler_consequence}
Concerning the symplectic curvature decomposition on a K\"ahler manifold,
$$\textstyle 
J_c{}^aV_{ab}{}^c{}_d
=-2\frac{n(n+2)}{n+1}\Sigma_{bd}.$$
\end{prop}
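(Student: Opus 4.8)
The plan is to derive Proposition~\ref{Kaehler_consequence} by exploiting the two distinct algebraic contractions of the K\"ahler curvature tensor that are already available: namely, the trace $J_c{}^aR_{ab}{}^c{}_d$ computed from the general symplectic decomposition (with $V_{ab}{}^c{}_d$ and $\Phi_{ab}$) on the one hand, and the same trace computed from the finer K\"ahler decomposition \eqref{Bochner_in_real_money} (with $U_{ab}{}^c{}_d$, $\Xi_{ab}$, $\Sigma_{ab}$, and $\Lambda$) on the other. Since these are two expressions for the same tensor, equating them and cancelling the common pieces isolates $J_c{}^aV_{ab}{}^c{}_d$ in terms of $\Sigma_{bd}$.

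Concretely, the first step is to record the relation $\Phi_{ab}=\frac{n+2}{n+1}\Xi_{ab}+\Lambda g_{ab}$, which follows by computing the Ricci tensor $R_{bd}=R_{ab}{}^a{}_d$ from \eqref{Bochner_in_real_money} and comparing with the formula $\Phi_{bd}=\frac1{2(n+1)}R_{ab}{}^a{}_d$ from \eqref{Phi}. The totally trace-free part $U_{ab}{}^c{}_d$ drops out of the Ricci trace, as do the other trace-free contributions, leaving only the terms proportional to $\Xi_{bd}$ and $\Lambda g_{bd}$; solving for $\Phi_{bd}$ gives the stated identity. The second step is to contract the symplectic decomposition $R_{ab}{}^c{}_d=V_{ab}{}^c{}_d+(\text{terms in }\Phi)$ with $J_c{}^a$, using $J_c{}^aJ_a{}^c=-2n$, $J_c{}^a\delta_a{}^c=0$, and the skewness conventions, to obtain $J_c{}^aR_{ab}{}^c{}_d=J_c{}^aV_{ab}{}^c{}_d-J_{bd}\Phi_a{}^a-2J_b{}^a\Phi_{da}$, and then substitute $\Phi_{ab}=\frac{n+2}{n+1}\Xi_{ab}+\Lambda g_{ab}$ together with $\Sigma_{bd}=J_b{}^a\Xi_{da}$ and $\Phi_a{}^a=2(n+1)\Lambda$ (since $\Xi$ is trace-free) to get $J_c{}^aV_{ab}{}^c{}_d-2\frac{n+2}{n+1}\Sigma_{bd}-2(n+1)\Lambda J_{bd}$.

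The third step is the parallel contraction of the K\"ahler decomposition \eqref{Bochner_in_real_money} with $J_c{}^a$. Here one must carefully track how each of the five $\Sigma$-terms and the $\Lambda$-bracket behave under contraction with $J_c{}^a$; the trace-free $U_{ab}{}^c{}_d$ vanishes by hypothesis, and the $\Xi$-bracket contributes a piece that combines with the $\Sigma$-terms. The claim, as stated in the excerpt, is that this yields exactly $J_c{}^aR_{ab}{}^c{}_d=-2(n+2)\Sigma_{bd}-2(n+1)\Lambda J_{bd}$. The final step is to equate the two expressions for $J_c{}^aR_{ab}{}^c{}_d$, cancel the common $-2(n+1)\Lambda J_{bd}$, and solve the resulting scalar equation $J_c{}^aV_{ab}{}^c{}_d-2\frac{n+2}{n+1}\Sigma_{bd}=-2(n+2)\Sigma_{bd}$ for $J_c{}^aV_{ab}{}^c{}_d$, obtaining $\bigl(2\frac{n+2}{n+1}-2(n+2)\bigr)\Sigma_{bd}=-2(n+2)\frac{n}{n+1}\Sigma_{bd}=-2\frac{n(n+2)}{n+1}\Sigma_{bd}$.

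The main obstacle is bookkeeping in the two $J_c{}^a$-contractions, particularly of \eqref{Bochner_in_real_money}: one needs the identities $J_a{}^cJ_c{}^a=-2n$, $J_a{}^cJ_{bd}J_c{}^a$ and $J_b{}^cJ_{ad}J_c{}^a$ reducing correctly, and the fact that contracting $J_{ad}\Sigma_b{}^c$ against $J_c{}^a$ uses $J_{ad}J_c{}^a=-\delta_c{}^d$ appropriately, so that all six $\Sigma$-type terms collapse to a single multiple of $\Sigma_{bd}$. Since $\Sigma_{bd}=J_b{}^a\Xi_{da}$ is skew while $\Xi$ is symmetric, one must also be consistent about index placement when converting between $\Xi$ and $\Sigma$. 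Everything else is routine linear algebra in the index calculus, and no analytic input is required — the proposition is purely a statement about the representation-theoretic decomposition of the curvature.
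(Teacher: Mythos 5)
Your proposal is correct and is essentially identical to the paper's own derivation: it computes $J_c{}^aR_{ab}{}^c{}_d$ once from the symplectic decomposition (after identifying $\Phi_{ab}=\frac{n+2}{n+1}\Xi_{ab}+\Lambda g_{ab}$ via the Ricci trace and \eqref{Phi}) and once from the K\"ahler decomposition \eqref{Bochner_in_real_money}, then equates the two and solves for $J_c{}^aV_{ab}{}^c{}_d$. The only quibble is the parenthetical claim $\Phi_a{}^a=2(n+1)\Lambda$, which should read $\Phi_a{}^a=2n\Lambda$; your displayed coefficient $-2(n+1)\Lambda J_{bd}$ is nonetheless right because the term $-2J_b{}^a\Phi_{da}$ contributes the extra $-2\Lambda J_{bd}$, and in any case the $\Lambda$-terms cancel in the final comparison, so the conclusion is unaffected.
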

\begin{cor}\label{whenKaehlerVvanishes} The symplectic tractor connection on a
K\"ahler manifold is symplectically flat if and only if the metric has constant
holomorphic sectional curvature.
\end{cor}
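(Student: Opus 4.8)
The plan is to combine Corollary~\ref{symplectically_flat_tractors} with Proposition~\ref{Kaehler_consequence}. Corollary~\ref{symplectically_flat_tractors} tells us that the symplectic tractor connection on the K\"ahler manifold $M$ is symplectically flat precisely when the tensor $V_{ab}{}^c{}_d$ vanishes identically. So the task is to show that, for a K\"ahler metric, $V_{ab}{}^c{}_d=0$ if and only if the holomorphic sectional curvature is constant. Recall that a K\"ahler metric has constant holomorphic sectional curvature exactly when its curvature tensor has the standard ``space form'' shape, i.e.\ $U_{ab}{}^c{}_d=0$ and $\Xi_{ab}=0$ in the decomposition~\eqref{Bochner_in_real_money} (equivalently $\Sigma_{ab}=0$), leaving only the $\Lambda$-term. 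The tensor $U$ is the Bochner tensor, and the assertion amounts to identifying vanishing of the symplectic $V$ with vanishing of the two K\"ahler invariants $U$ and $\Xi$ (equivalently $\Sigma$).

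First I would note the ``easy'' direction: if the metric has constant holomorphic sectional curvature, then $U_{ab}{}^c{}_d=0$ and $\Sigma_{bd}=0$, so Proposition~\ref{Kaehler_consequence} already gives $J_c{}^aV_{ab}{}^c{}_d=0$; but one wants the stronger conclusion $V_{ab}{}^c{}_d=0$. For this I would compare the full curvature decompositions: writing out~\eqref{Bochner_in_real_money} for the space form (only the $\Lambda$-term surviving) and subtracting the symplectic decomposition from \S\ref{tractors} with $\Phi_{ab}=\Lambda g_{ab}$ (which is what $\Phi$ reduces to when $\Xi_{ab}=0$), the ``Schouten-like'' terms match identically by an algebraic identity relating $J_{ad}g^{ce}-\cdots$ to $J_a{}^cJ_{bd}-\cdots$, forcing $V_{ab}{}^c{}_d=0$. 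Concretely, one checks
$$\delta_a{}^c g_{bd}-\delta_b{}^c g_{ad}+J_a{}^cJ_{bd}-J_b{}^cJ_{ad}+2J_{ab}J^c{}_d
=\delta_a{}^c\Phi_{bd}-\delta_b{}^c\Phi_{ad}+J_{ad}\Phi_{be}J^{ce}-J_{bd}\Phi_{ae}J^{ce}+2J_{ab}\Phi_{de}J^{ce}$$
when $\Phi_{ab}=g_{ab}$ and $J_a{}^b\equiv J_{ac}g^{bc}$, using $J_a{}^cJ_{bd}=J_{bd}\Phi_{ae}J^{ce}$ and $J^c{}_d = \Phi_{de}J^{ce}$.

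For the converse, suppose $V_{ab}{}^c{}_d=0$. Then Proposition~\ref{Kaehler_consequence} immediately gives $\Sigma_{bd}=0$, hence $\Xi_{ab}=0$ and $\Phi_{ab}=\Lambda g_{ab}$; feeding this back into the comparison of the two decompositions as above yields $U_{ab}{}^c{}_d=0$ as well, so the Riemann tensor is exactly the space-form tensor and the holomorphic sectional curvature is constant (equal to a multiple of $\Lambda$).

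The main obstacle I anticipate is purely bookkeeping: carefully matching the two different-looking irreducible decompositions (the $\mathrm{GL}$/$\mathrm{Sp}$ one from \S\ref{tractors} versus the finer K\"ahler $\mathrm{U}(n)$ one of~\eqref{Bochner_in_real_money}), keeping track of how $J_{ab}$, $g_{ab}$, $J_a{}^b$, and index raising interact, and making sure the totally trace-free Bochner part $U$ really does sit inside the trace-free symplectic part $V$ (so that no hidden cross-terms obstruct the conclusion $V=0 \Rightarrow U=0$). This is a finite, routine—if slightly tedious—tensor computation, and once it is in place both directions follow cleanly. I would therefore present the proof as: (i) quote Corollary~\ref{symplectically_flat_tractors} to reduce to $V=0$; (ii) quote Proposition~\ref{Kaehler_consequence} to get $V=0 \iff \Sigma=0$ \emph{together with} $U=0$, the latter via the decomposition comparison; (iii) identify $\Sigma=0=U$ with constant holomorphic sectional curvature.
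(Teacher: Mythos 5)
Your proposal is correct and follows essentially the same route as the paper: reduce to $V_{ab}{}^c{}_d=0$ via Corollary~\ref{symplectically_flat_tractors}, extract $\Sigma_{ab}=0$ (hence $\Xi_{ab}=0$) from Proposition~\ref{Kaehler_consequence}, and observe that the totally trace-free Bochner part $U_{ab}{}^c{}_d$ lies entirely inside the $V$-part, so that $V=0$ is equivalent to the curvature taking the constant-holomorphic-sectional-curvature form. (A couple of the auxiliary sub-identities you quote, e.g.\ $J_a{}^cJ_{bd}=J_{bd}\Phi_{ae}J^{ce}$, are off by a sign term-by-term, but the antisymmetrized pairs match, so the comparison of the two decompositions goes through exactly as you describe.)
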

\begin{proof} According to Corollary~\ref{symplectically_flat_tractors}, we
have to interpret the constraint $V_{ab}{}^c{}_d=0$ in the K\"ahler case.
{From} (\ref{Bochner_in_real_money}) it is already clear that
$U_{ab}{}^c{}_d=0$ and Proposition~\ref{Kaehler_consequence} implies that also
$\Sigma_{ab}=0$ so (\ref{Bochner_in_real_money}) reduces to
$$R_{ab}{}^c{}_d=\Lambda(\delta_a{}^cg_{bd}-\delta_b{}^cg_{ad}
+J_a{}^cJ_{bd}
-J_b{}^cJ_{ad}
+2J_{ab}J^c{}_d),$$
which is exactly the constancy of holomorphic sectional curvature.
\end{proof}

\section{BGG-like complexes on ${\mathbb{CP}}_n$}\label{BGG-like}
Fix a real vector space ${\mathfrak{g}}_{-1}$ of dimension $2n$, let 
${\mathfrak{g}}_1$ denotes its dual, and fix a non-degenerate $2$-form 
$J_{ab}\in\Wedge^2{\mathfrak{g}}_1$. The $(2n+1)$-dimensional Heisenberg Lie 
algebra may be realised as 
$${\mathfrak{h}}={\mathbb{R}}\oplus{\mathfrak{g}}_{-1},$$ 
where the first summand is the $1$-dimensional centre of ${\mathfrak{h}}$ and
the Lie bracket on ${\mathfrak{g}}_{-1}$ is given by
$$[X,Y]=2J_{ab}X^aY^b\in{\mathbb{R}}\hookrightarrow{\mathfrak{h}}.$$
We should admit right away that the reason for this seemingly arcane notation
is that we shall soon have occasion to write
\begin{equation}\label{sp(2n+2,R)}
\addtolength{\arraycolsep}{-2pt}{\mathfrak{sp}}(2n+2,{\mathbb{R}})
=\begin{array}[t]{ccccccccc}{\mathfrak{g}}_{-2}&\oplus&{\mathfrak{g}}_{-1}
&\oplus&{\mathfrak{g}}_0&\oplus
&{\mathfrak{g}}_1&\oplus&{\mathfrak{g}}_2\\
\|&&&&\|&&&&\|\\
{\mathbb{R}}
&&&&\makebox[0pt]{${\mathfrak{sp}}(2n,{\mathbb{R}})\oplus{\mathbb{R}}$\quad}
&&&&{\mathbb{R}}\end{array}\end{equation}
(a $|2|$-graded Lie algebra as in \cite[\S4.2.6]{CS}) and, in particular,
regard ${\mathfrak{h}}={\mathbb{R}}\oplus{\mathfrak{g}}_{-1}
={\mathfrak{g}}_{-2}\oplus{\mathfrak{g}}_{-1}$ as a Lie subalgebra of
${\mathfrak{sp}}(2n+2,{\mathbb{R}})$. Be that as it may, let us suppose that
${\mathbb{V}}$ is a finite-dimensional representation of~${\mathfrak{h}}$. The
Lie algebra cohomology $H^r({\mathfrak{h}},{\mathbb{V}})$ may be realised as
the cohomology of the Chevalley-Eilenberg complex
\begin{equation}\label{C-E}
0\to{\mathbb{V}}\to{\mathfrak{h}}^*\otimes{\mathbb{V}}\to
\cdots\to\Wedge^r{\mathfrak{h}}^*\otimes{\mathbb{V}}\to
\Wedge^{r+1}{\mathfrak{h}}^*\otimes{\mathbb{V}}\to\cdots\end{equation}
as, for example, in~\cite[Chapter~IV]{Knapp}. We shall require, however, the
following alternative realisation. 
\begin{lemma}\label{LieAlgBGG} There is a complex
\begin{equation}\addtolength{\arraycolsep}{-2pt}\label{HeisenbergBGG}
\begin{array}{rcccccccccc}
0&\to&{\mathbb{V}}&\stackrel{\partial}{\longrightarrow}&
{\mathfrak{g}}_1\otimes{\mathbb{V}}
&\stackrel{\partial_\perp}{\longrightarrow}&
\Wedge_\perp^2{\mathfrak{g}}_1\otimes{\mathbb{V}}
&\stackrel{\partial_\perp}{\longrightarrow}&\cdots
&\stackrel{\partial_\perp}{\longrightarrow}&
\Wedge_\perp^n{\mathfrak{g}}_1\otimes{\mathbb{V}}\\[2pt]
&&&&&&&&&&
\big\downarrow\\
0&\leftarrow&{\mathbb{V}}&\stackrel{\partial_\perp}{\longleftarrow}&
{\mathfrak{g}}_1\otimes{\mathbb{V}}
&\stackrel{\partial_\perp}{\longleftarrow}&
\Wedge_\perp^2{\mathfrak{g}}_1\otimes{\mathbb{V}}
&\stackrel{\partial_\perp}{\longleftarrow}&\cdots
&\stackrel{\partial_\perp}{\longleftarrow}&
\Wedge_\perp^n{\mathfrak{g}}_1\otimes{\mathbb{V}}
\end{array}\end{equation}
whose cohomology realises $H^r({\mathfrak{h}},{\mathbb{V}})$. Here, we are
writing
$$\Wedge_\perp^r{\mathfrak{g}}_1
\equiv\{\omega_{abc\cdots d}\in\Wedge^r{\mathfrak{g}}_1\mid
J^{ab}\omega_{abc\cdots d}=0\},$$
where $J^{ab}\in\Wedge^2{\mathfrak{g}}_{-1}$ is the
inverse of $J_{ab}\in\Wedge^2{\mathfrak{g}}_1$ 
(let's say normalised so that $J_{ab}J^{ac}=\delta_b{}^c$). 
\end{lemma}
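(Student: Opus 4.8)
The plan is to mimic, at the level of the Heisenberg Lie algebra, exactly the construction that was carried out geometrically in the first two sections: the complex \eqref{C-E} is the algebraic model of the coupled de~Rham sequence \eqref{coupled_de_Rham}, and \eqref{HeisenbergBGG} is to be its algebraic Rumin--Seshadri replacement. Concretely, since $\mathfrak{h}^*=\mathbb{R}\oplus\mathfrak{g}_1$ with the $\mathbb{R}$-summand paired with the centre, we may filter $\Wedge^r\mathfrak{h}^*\otimes\mathbb{V}$ according to how many factors come from the centre: a wedge power of $\mathfrak{h}^*$ either contains the central covector $\zeta$ (dual to $1\in\mathbb{R}\hookrightarrow\mathfrak{h}$) or does not, giving
$$\Wedge^r\mathfrak{h}^*\otimes\mathbb{V}
=\big(\Wedge^r\mathfrak{g}_1\otimes\mathbb{V}\big)\oplus
\big(\zeta\wedge\Wedge^{r-1}\mathfrak{g}_1\otimes\mathbb{V}\big).$$
First I would record that the Chevalley--Eilenberg differential, split according to this decomposition, has the shape $\partial_{\mathrm{CE}}=\partial_{\mathfrak{g}_1}\pm\zeta\wedge(J\wedge\,\cdot\,)$ plus the action of $\mathbb{V}$; the only subtlety is the extra term $\zeta\wedge(J\wedge\,\cdot\,)$ coming from the bracket $[X,Y]=2J_{ab}X^aY^b$ landing in the centre. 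This is the precise algebraic analogue of the operators $\phi\mapsto\nabla\phi-J\otimes\eta$, etc., appearing in Lemma~\ref{key_lemma}, with $\Theta$ replaced by the action of the central element and $\nabla$ by $\partial_{\mathfrak{g}_1}$ together with the $\mathfrak{g}_{-1}$-action.

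Next I would run the Cartan--Eilenberg / ``algebraic Hodge'' argument that underlies the Rumin--Seshadri complex. Because $J\wedge\,\cdot\,:\Wedge^{k}\mathfrak{g}_1\to\Wedge^{k+2}\mathfrak{g}_1$ is injective for $k<n$ and surjective for $k\geq n-1$ (this is the $\mathfrak{sl}(2)$-representation theory behind the splitting $\Wedge^k=\Wedge_\perp^k\oplus J\wedge\Wedge^{k-2}$ already used throughout \S1), one can perform a change of generators in the double complex above to cancel, in bidegree by bidegree, all the parts on which $J\wedge\,\cdot\,$ is an isomorphism. This is exactly the ``prequantisation'' trick $\tilde\nabla=\nabla-\tau\otimes\Theta$ from the proof of Lemma~\ref{key_lemma}, done here purely algebraically: one introduces the formal homotopy that on $\zeta\wedge\Wedge^{k-1}\mathfrak{g}_1$ inverts $J\wedge\,\cdot\,$. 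What survives is a two-row complex supported in form-degrees $1,2,\dots,n$ in the top row and $n,n-1,\dots,1$ in the bottom row, with the connecting map in degree $n$ being second order (the composite $\partial_\perp\circ\partial_\perp$, corrected by a multiple of the central action, just as in \eqref{middle_operator})—in other words, precisely \eqref{HeisenbergBGG}. The maps $\partial_\perp$ are the algebraic counterparts of \eqref{early} and \eqref{late}: $\partial_\perp=\pi\circ\partial_{\mathfrak{g}_1}$ on the ascending side and contraction with $J^{ab}$ on the descending side. By construction this manoeuvre does not change cohomology, so $H^r$ of \eqref{HeisenbergBGG} equals $H^r$ of \eqref{C-E}, which is $H^r(\mathfrak{h},\mathbb{V})$.

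There are two technical points to check carefully. The first, and the one I expect to be the main obstacle, is the bookkeeping that shows the reduced complex really closes up into the stated diagram with a single second-order arrow in the middle: one must verify that the ``leftover'' pieces at form-degree $n$, where $J\wedge\,\cdot\,$ changes from injective to surjective, assemble exactly into the composite $\Wedge_\perp^n\to\Wedge_\perp^{n-1}\to\Wedge_\perp^n$ and that the central-element correction appears with the right coefficient; this is the algebraic shadow of the identities \eqref{trick}, \eqref{combinatorics}, \eqref{algebra} used in the proof of Theorem~\ref{one}, and it is best handled by the same $\mathfrak{sl}(2)$-module calculus. The second point is purely formal: the homotopy used to excise the acyclic pieces respects the $\mathbb{V}$-action and the action of $\mathfrak{g}_{-1}$, so it is an honest chain homotopy of the full Chevalley--Eilenberg complex and not merely of its ``symbol''; this follows because $J\wedge\,\cdot\,$ is a $\mathrm{Sp}(2n,\mathbb{R})$-map and the correction terms are built from $J$ alone. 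Once these are in place the lemma follows, and I would remark—exactly as the authors do for Theorem~\ref{one}—that the uncoupled statement (the case $\mathbb{V}=\mathbb{R}$, which recovers the Lie algebra cohomology $H^*(\mathfrak{h})$) is included as a special case.
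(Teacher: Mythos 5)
Your proposal follows essentially the same route as the paper: split $\Wedge^r\mathfrak{h}^*\otimes\mathbb{V}$ via $\mathfrak{h}^*=\mathfrak{g}_1\oplus\mathbb{R}$ into a two-row rewriting of the Chevalley--Eilenberg complex, identify the extra piece of the differential as $\pm J\wedge(\,\cdot\,)$ together with the central action $\theta$, and then cancel the summands on which $J\wedge(\,\cdot\,)$ is injective/surjective (using exactly the range of degrees where it is injective, an isomorphism, or surjective) to leave \eqref{HeisenbergBGG}, the reduction being carried out by a chain homotopy, diagram chase, or the spectral sequence of the filtered complex. This matches the paper's proof, so the proposal is correct and not materially different.
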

\begin{proof}
Notice that any representation $\rho:{\mathfrak{h}}\to\End({\mathbb{V}})$ is
determined by its restriction to~${\mathfrak{g}}_{-1}\subset{\mathfrak{h}}$.
Indeed, writing $\partial_a:{\mathfrak{g}}_{-1}\to\End({\mathbb{V}})$ for this
restriction, to say that $\rho$ is a representation of ${\mathfrak{h}}$ is to
say that
\begin{equation}\label{rep_of_h}\begin{array}{rcl}
(\partial_a\partial_b-\partial_b\partial_a)v
&=&2J_{ab}\theta v\\
(\partial_a\theta-\theta\partial_a)v&=&0\end{array}
\raisebox{2pt}{$\Big\}\quad\forall\,v\in{\mathbb{V}}$},
\end{equation}
where $\theta\in\End({\mathbb{V}})$ is $\rho(1)$ for 
$1\in{\mathbb{R}}\subset{\mathfrak{h}}$. 

The splitting ${\mathfrak{h}}^*={\mathfrak{g}}_1\oplus{\mathbb{R}}$
allows us to write (\ref{C-E}) as 
\begin{equation}\label{C-E_rewritten}
\addtolength{\arraycolsep}{-2pt}\begin{array}{cccccccccc}
{\mathbb{V}}&\longrightarrow&{\mathfrak{h}}^*\otimes{\mathbb{V}}
&\longrightarrow&\Wedge^2{\mathfrak{h}}^*\otimes{\mathbb{V}}
&\longrightarrow&\Wedge^3{\mathfrak{h}}^*\otimes{\mathbb{V}}
&\longrightarrow&\cdots\\
\|&&\|&&\|&&\|\\
{\mathbb{V}}&\longrightarrow&{\mathfrak{g}}_1\otimes{\mathbb{V}}
&\longrightarrow&\Wedge^2{\mathfrak{g}}_1\otimes {\mathbb{V}}
&\longrightarrow&\Wedge^3{\mathfrak{g}}_1\otimes {\mathbb{V}}
&\longrightarrow&\cdots\,,\\
&\begin{picture}(0,0)(0,-3)
\put(-9,6){\vector(3,-2){18}}\end{picture}
&\oplus
&\begin{picture}(0,0)(0,-3)
\put(-9,-6){\vector(3,2){18}}
\put(-9,6){\vector(3,-2){18}}\end{picture}
&\oplus
&\begin{picture}(0,0)(0,-3)
\put(-9,-6){\vector(3,2){18}}
\put(-9,6){\vector(3,-2){18}}\end{picture}
&\oplus
&\begin{picture}(0,0)(0,-3)
\put(-9,-6){\vector(3,2){18}}
\put(-9,6){\vector(3,-2){18}}\end{picture}\\
&&{\mathbb{V}}&\longrightarrow
&{\mathfrak{g}}_1\otimes{\mathbb{V}}
&\longrightarrow
&\Wedge^2{\mathfrak{g}}_1\otimes{\mathbb{V}}
&\longrightarrow
&\cdots
\end{array}\end{equation}
where the differentials are given by 
$$v\!\mapsto\!\left[\!\begin{array}{c}\partial_av\\ 
\theta v\end{array}\!\right]
\quad
\left[\!\begin{array}{c}\phi_a\\ \eta\end{array}\!\right]
\!\mapsto\!\left[\!\begin{array}{c}\partial_{[a}\phi_{b]}-J_{ab}\eta\\ 
\partial_a\eta-\theta\phi_a\end{array}\!\right]
\quad\left[\!\begin{array}{c}\omega_{ab}\\ \psi_a\end{array}\!\right]
\!\mapsto\!
\left[\!\begin{array}{c}\partial_{[a}\omega_{bc]}+J_{[ab}\psi_{c]}\\ 
\partial_{[a}\psi_{b]}+\theta\omega_{ab}\end{array}\!\right]$$
et cetera. In particular, notice that the homomorphisms 
\begin{equation}\label{Jwedge}
\Wedge^{r-1}{\mathfrak{g}}_1\ni\psi\longmapsto
\pm J\wedge\psi\in\Wedge^{r+1}{\mathfrak{g}}_1\end{equation}
are
\begin{itemize}
\item independent of the representation on~${\mathbb{V}}$,
\item injective for $1\leq r<n$,
\item an isomorphism for $r=n$,
\item surjective for $n<r\leq 2n-1$.
\end{itemize}
Note that $\Wedge_\perp^{r+1}{\mathfrak{g}}_1$ is complementary to the image 
of (\ref{Jwedge}) for $1\leq r<n$. Also note the isomorphisms
$$\Wedge^{2n+1-r}{\mathfrak{g}}_1
\xrightarrow{\,J\wedge J\wedge\cdots\wedge J\,}
\Wedge^{r-1}{\mathfrak{g}}_1,\quad\mbox{for}\enskip
n<r\leq 2n+1,$$
under which the kernel of (\ref{Jwedge}) 
may be identified with 
$$\Wedge^{2n+1-r}_\perp{\mathfrak{g}}_1,
\quad\mbox{for}\enskip n<r\leq 2n-1.$$ 
Diagram chasing in (\ref{C-E_rewritten}) (or the spectral sequence of a 
filtered complex) finishes the proof.
\end{proof}
\noindent{\bf Remark.} Evidently, the equations (\ref{rep_of_h}) are algebraic 
versions of 
$$\begin{array}{rcl}
(\nabla_a\nabla_b-\nabla_b\nabla_a)\Sigma
&=&2J_{ab}\Theta \Sigma\\
(\nabla_a\Theta-\Theta\nabla_a)\Sigma&=&0\end{array}
\raisebox{2pt}{$\Big\}\quad\forall\,\Sigma\in\Gamma(E)$},$$
which hold for a symplectically flat connection $\nabla_a$ on smooth vector
bundle $E$ on~$M$. Also (\ref{C-E_rewritten}) is the evident algebraic
counterpart to the differential complex of Lemma~\ref{key_lemma}. It follows
that explicit formul{\ae} for the operators $\partial_\perp$ in the complex
(\ref{HeisenbergBGG}) follow the differential versions (\ref{early}) and
(\ref{late}) with $\Wedge_\perp^n{\mathfrak{g}}\otimes{\mathbb{V}}\to
\Wedge_\perp^n{\mathfrak{g}}\otimes{\mathbb{V}}$ being given by
$\partial_\perp^2-\frac2n\theta$.


Let us now consider the tractor connection on ${\mathbb{CP}}_n$. According to
Theorem~\ref{two}, the remarks following its statement, and the discussions
in~\S\ref{tractor_bundles}, this is the connection on 
${\T}=\Wedge^0\oplus\Wedge^1\oplus\Wedge^0$ 
given by
$$\nabla_a\!\left[\!\begin{array}c\sigma\\ \mu_b\\ 
\rho\end{array}\!\right]=
\left[\!\begin{array}c\nabla_a\sigma-\mu_a\\
\nabla_a\mu_b+J_{ab}\rho+g_{ab}\sigma\\ 
\nabla_a\rho-J_a{}^b\mu_b
\end{array}\!\right]=
\left[\!\begin{array}c\nabla_a\sigma\\
\nabla_a\mu_b+g_{ab}\sigma\\ 
\nabla_a\rho-J_a{}^b\mu_b
\end{array}\!\right]
+\left[\!\begin{array}c-\mu_a\\
J_{ab}\rho\\ 
0\end{array}\!\right]\!.$$
The induced operator
$\nabla:\Wedge^1\otimes{\T}\to\Wedge^2\otimes{\T}$ is
$$\left[\!\begin{array}c\sigma_b\\ \mu_{bc}\\ 
\rho_b\end{array}\!\right]\longmapsto
\left[\!\begin{array}c\nabla_{[a}\sigma_{b]}\\
\nabla_{[a}\mu_{b]c}+g_{c[a}\sigma_{b]}\\ 
\nabla_{[a}\rho_{b]}-J_{[a}{}^c\mu_{b]c}
\end{array}\!\right]
+\left[\!\begin{array}c\mu_{[ab]}\\
-J_{c[a}\rho_{b]}\\ 
0\end{array}\!\right]\!$$
but Corollary~\ref{whenKaehlerVvanishes} says the tractor connection
on ${\mathbb{CP}}_n$ is symplectically flat so we should contemplate
$\nabla_\perp:\Wedge^1\otimes{\T}\to\Wedge_\perp^2\otimes{\T}$ from
Theorem~\ref{one}, viz.
$$\left[\!\begin{array}c\sigma_b\\ \mu_{bc}\\ 
\rho_b\end{array}\!\right]\longmapsto
\left[\!\begin{array}c
\nabla_{[a}\sigma_{b]}-\frac1{2n}J^{cd}\nabla_c\sigma_dJ_{ab}\\
\ldots\\ 
\ldots\end{array}\!\right]
+\left[\!\begin{array}c\mu_{[ab]}-\frac1{2n}J^{cd}\mu_{cd}J_{ab}\\
-J_{c[a}\rho_{b]}-\frac1{2n}\rho_cJ_{ab}\\ 
0\end{array}\!\right]\!.$$
From these formul{\ae}, let us focus attention on the homomorphisms
\begin{equation}\label{attention_is_focussed}
\makebox[0pt]{$\begin{array}{ccccccccc}
0&\to&{\T}&\to&\Wedge^1\otimes{\T}&\to
&\Wedge_\perp^2\otimes{\T}&\to&\cdots\\[4pt]
&&\left[\!\begin{array}c\sigma\\ \mu_b\\ 
\rho\end{array}\!\right]&\mapsto
&\left[\!\begin{array}c-\mu_a\\
J_{ab}\rho\\ 
0\end{array}\!\right]\\[22pt]
&&&&\left[\!\begin{array}c\sigma_b\\ \mu_{bc}\\ 
\rho_b\end{array}\!\right]
&\mapsto
&\left[\!\begin{array}c\mu_{[ab]}-\frac1{2n}J^{cd}\mu_{cd}J_{ab}\\
-J_{c[a}\rho_{b]}-\frac1{2n}\rho_cJ_{ab}\\ 
0\end{array}\!\right]
\end{array}$}\end{equation}
It is evident that this is a complex and that its cohomology so far is 
$$\textstyle\Wedge^0\mbox{ in degree }0\quad\mbox{and}\quad
\bigodot^2\!\Wedge^1\mbox{ in degree }1.$$
On the other hand, one may check that the defining representation of the Lie
algebra ${\mathfrak{sp}}(2n+2,{\mathbb{R}})$ on
${\mathbb{R}}^{2n+2}={\mathbb{R}}\oplus{\mathbb{R}}^{2n}\oplus{\mathbb{R}}$
restricts via (\ref{sp(2n+2,R)}) to a representation of the Heisenberg Lie
algebra ${\mathfrak{h}}={\mathbb{R}}\oplus{\mathfrak{g}}_{-1}$, given 
explicitly by
$$\begin{array}[t]{ccc}
{\mathbb{R}}^{2n+2}&\xrightarrow{\,\theta\,}
&{\mathbb{R}}^{2n+2}\\
\left[\!\begin{array}c\sigma\\ \mu_b\\ 
\rho\end{array}\!\right]&\longmapsto&\left[\!\begin{array}c\rho\\
0\\ 
0\end{array}\!\right]
\end{array}\mbox{\quad and\quad}\begin{array}[t]{ccc}
{\mathbb{R}}^{2n+2}&\xrightarrow{\,\partial_a\,}
&{\mathfrak{g}}_{1}\otimes{\mathbb{R}}^{2n+2}\\
\left[\!\begin{array}c\sigma\\ \mu_b\\ 
\rho\end{array}\!\right]&\longmapsto&\left[\!\begin{array}c-\mu_a\\
J_{ab}\rho\\ 
0\end{array}\!\right]
\end{array}$$
(noticing that equations (\ref{rep_of_h}) hold, as they must). We may also find
$\theta$ as part of the curvature of the tractor connection
on~${\mathbb{CP}}_n$. Specifically, the formula from
Proposition~\ref{tractor_curvature} reduces to
\begin{equation}\label{tractor_curvature_on_CPn}
(\nabla_a\nabla_a-\nabla_b\nabla_a)\!
\left[\!\begin{array}c\sigma\\ \mu_d\\ \rho\end{array}\!\right]
=2J_{ab}\!\left[\!\begin{array}{c}
\rho\\ J_d{}^e\mu_e\\ 
-\sigma
\end{array}\!\right]\end{equation}
and we find $\theta$ as the top component of
$\Theta:{\T}\to{\T}$ where $\Theta$ is defined 
by~(\ref{Theta_in_the_symplectically_flat_case}). If we now consider the 
entire complex from Theorem~\ref{one}, with filtration induced by
$$\begin{array}{ccccccc}\Wedge^0&\subset&\Wedge^1\oplus\Wedge^0
&\subset&\Wedge^0\oplus\Wedge^1\oplus\Wedge^0&=&{\T}\\
\left[\!\begin{array}c 0\\ 0\\ \rho\end{array}\!\right]
&&\left[\!\begin{array}c 0\\ \mu_b\\ \rho\end{array}\!\right]
&&\left[\!\begin{array}c\sigma\\ \mu_b\\ \rho\end{array}\!\right]
\end{array}$$
of ${\T}$, then the associated spectral sequence (or corresponding
diagram chasing) yields (\ref{attention_is_focussed}) continuing as in
(\ref{HeisenbergBGG}) including the middle operator
$\nabla_\perp^2-\frac2n\theta:\Wedge_\perp^n\to\Wedge_\perp^n$. The same
reasoning pertains for any Fedosov structure with $V_{ab}{}^c{}_d=0$ as in
Corollary~\ref{symplectically_flat_tractors}. Evidently, this sequence of
vector bundle homomorphisms is induced by the complex (\ref{HeisenbergBGG})
and, together with Lemma~\ref{LieAlgBGG}, the spectral sequence of a filtered
complex (or the appropriate diagram chasing) immediately yields the following.
\begin{thm}\label{towardsBGG}
Suppose $\nabla_a$ is a torsion-free connection on a symplectic manifold
$(M,J_{ab})$, such that $\nabla_aJ_{bc}=0$ and so that the corresponding
curvature tensor $V_{ab}{}^c{}_d$ vanishes. Fix a finite-dimensional
representation ${\mathbb{E}}$ of\/ ${\mathrm{Sp}}(2n+2,{\mathbb{R}})$ and let
$E$ denote the associated `tractor bundle' induced from the standard tractor
bundle and the representation~${\mathbb{E}}$ (so that the standard
representation of\/ ${\mathrm{Sp}}(2n+2,{\mathbb{R}})$ on\/
${\mathbb{R}}^{2n+2}$ yields the standard tractor bundle). In accordance with
Corollary~\ref{symplectically_flat_tractors}, the induced `tractor connection'
$\nabla:E\to\Wedge^1\otimes E$ is symplectically flat and we may define
$\Theta:E\to E$ by~\eqref{Theta_in_the_symplectically_flat_case}. Having done 
this, there are complexes of differential operators
$$\addtolength{\arraycolsep}{-3pt}\begin{array}{rcccccccccc}
0&\to&H^0({\mathfrak{h}},E)&\to&H^1({\mathfrak{h}},E)
&\to&H^2({\mathfrak{h}},E)
&\to&\cdots
&\to&H^n({\mathfrak{h}},E)\\[2pt]
&&&&&&&&&&
\big\downarrow\\
0&\leftarrow&H^{2n+1}({\mathfrak{h}},E)&\leftarrow
&H^{2n}({\mathfrak{h}},E)&\leftarrow
&H^{2n-1}({\mathfrak{h}},E)
&\leftarrow&\cdots
&\leftarrow&H^{n+1}({\mathfrak{h}},E)
\end{array}$$
where $H^r({\mathfrak{h}},E)$ denotes the tensor bundle on $M$ that is induced
by the cohomology $H^r({\mathfrak{h}},{\mathbb{E}})$ as a representation of
${\mathrm{Sp}}(2n,{\mathbb{R}})$. This complex is 
locally exact except near the beginning where 
$$\ker:H^0({\mathfrak{h}},E)\to H^1({\mathfrak{h}},E)
\quad\mbox{and}\quad
\frac{\ker:H^1({\mathfrak{h}},E)\to H^2({\mathfrak{h}},E)}
{\im:H^0({\mathfrak{h}},E)\to H^1({\mathfrak{h}},E)}$$
may be identified with the locally constant sheaves \underbar{$\ker\Theta$} and
\underbar{$\coker\Theta$}, respectively. In particular, for\/
${\mathbb{CP}}_n$ with its Fubini--Study connection, these sheaves vanish and 
the complex is locally exact everywhere. 
\end{thm}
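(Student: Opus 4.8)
The plan is to apply Theorem~\ref{one} to the tractor bundle $E$ and then run the spectral sequence of a filtered complex, with Lemma~\ref{LieAlgBGG} supplying the identification of the relevant page. First one notes that $V_{ab}{}^c{}_d=0$ makes the tractor connection symplectically flat by Corollary~\ref{symplectically_flat_tractors}, and that this descends to any induced tractor bundle $E$: its curvature is the image under $\mathbb{E}$ of the curvature of $\T$, which is a multiple of $J_{ab}$ by Proposition~\ref{tractor_curvature}, hence stays of the form $2J_{ab}\Theta$ with $\Theta$ the homomorphism in the statement. Theorem~\ref{one} then produces the coupled Rumin--Seshadri complex $\mathcal{C}^\bullet$, with terms $\Wedge_\perp^k\otimes E$ and middle operator $\nabla_\perp^2-\tfrac2n\Theta$, locally exact except near the start, where its cohomology is $\underbar{$\ker\Theta$}$ in degree~$0$ and $\underbar{$\coker\Theta$}$ in degree~$1$.

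Next one filters. The tractor filtration of $\T$ written before the statement generalises, for any $\mathbb{E}$, to a filtration of $E$ whose associated graded $\mathrm{gr}\,E$ is the bundle of $\mathfrak{h}$-modules modelled on $\mathbb{E}$, with $\mathfrak{g}_{-1}$ acting by the algebraic part $\Psi_a$ of the tractor connection (for $\T$, $[\sigma,\mu_b,\rho]\mapsto[-\mu_a,J_{ab}\rho,0]$) and $\mathfrak{g}_{-2}$ by the leading algebraic part $\theta$ of $\Theta$ (for $\T$, $[\sigma,\mu_b,\rho]\mapsto[\rho,0,0]$) --- exactly the data of (\ref{rep_of_h}), as in the Remark after Lemma~\ref{LieAlgBGG}. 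Grading $\Wedge_\perp^k\otimes\mathrm{gr}\,E$ by a combination of form degree and grading index filters every term of $\mathcal{C}^\bullet$ compatibly with the differentials, since $\nabla_a=\tilde\nabla_a+\Psi_a$ with $\tilde\nabla_a$ grading-preserving and $\Psi_a$ strictly shifting the grading; the $E_0$-differential then reduces to the purely algebraic leading part of $\nabla_\perp$, respectively of $\nabla_\perp^2-\tfrac2n\Theta$. Comparing with the explicit formul\ae~(\ref{early}), (\ref{late}) and (\ref{middle_operator}) one finds that $\mathrm{gr}\,\mathcal{C}^\bullet$ with this $E_0$-differential is precisely the bundle of complexes (\ref{HeisenbergBGG}) with $\mathbb{V}=\mathbb{E}$. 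Since $\mathfrak{h}$ is normalised by $\mathfrak{g}_0$, each $H^r(\mathfrak{h},\mathbb{E})$ is an $\mathrm{Sp}(2n,\mathbb{R})$-module, so Lemma~\ref{LieAlgBGG} applied fibrewise identifies the $E_1$-page with the sequence of bundles $H^r(\mathfrak{h},E)$, and its differentials are differential operators forming the asserted complex.

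The remaining step --- and I expect this to be the main obstacle --- is to show that this $E_1$-complex genuinely computes the cohomology of $\mathcal{C}^\bullet$, i.e.\ that the spectral sequence collapses at $E_2$. This is precisely the standard curved BGG situation: using that $\Psi$ and $\theta$ assemble into a Kostant-type codifferential whose harmonic subquotient is $H^\bullet(\mathfrak{h},\mathbb{E})$ (Lemma~\ref{LieAlgBGG}), one constructs splitting operators $H^r(\mathfrak{h},E)\to\Wedge_\perp^r\otimes E$ and projections $\Wedge_\perp^r\otimes E\to H^r(\mathfrak{h},E)$ and, by homological perturbation, exhibits the $H^r(\mathfrak{h},E)$-complex as a deformation retract of $\mathcal{C}^\bullet$ --- equivalently, one promotes the diagram chase behind Lemma~\ref{LieAlgBGG} to the bundle level. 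The delicate point is that the splitting operators must come out as differential operators of the correct finite order, and this is exactly where symplectic flatness (the absence of curvature beyond $2J_{ab}\Theta$) is used. Granting it, the cohomology of the BGG complex agrees with that of $\mathcal{C}^\bullet$, which gives local exactness past the start and identifies the first two cohomologies with $\underbar{$\ker\Theta$}$ and $\underbar{$\coker\Theta$}$. Finally, for $\mathbb{CP}_n$ the last clause reduces to invertibility of $\Theta$ on the tractor bundles involved: for the standard bundle, the curvature formula (\ref{tractor_curvature_on_CPn}) gives $\Theta^2=-\mathrm{id}$, so $\underbar{$\ker\Theta$}$ and $\underbar{$\coker\Theta$}$ vanish and the complex is exact everywhere.
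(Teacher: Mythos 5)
Your proposal follows essentially the same route as the paper: apply Theorem~\ref{one} to the induced tractor bundle (symplectically flat by Corollary~\ref{symplectically_flat_tractors}), filter the resulting coupled Rumin--Seshadri complex by the tractor filtration, identify the graded algebraic differential with the Chevalley--Eilenberg data of Lemma~\ref{LieAlgBGG} so that the first page consists of the bundles $H^r(\mathfrak{h},E)$, and conclude by the spectral sequence of a filtered complex, with the final clause for ${\mathbb{CP}}_n$ reduced to invertibility of $\Theta$ via~\eqref{tractor_curvature_on_CPn}. If anything you are more explicit than the paper, which dispatches the convergence/degeneration issue with ``the spectral sequence of a filtered complex (or the appropriate diagram chasing) immediately yields'' where you correctly flag the splitting-operator/homological-perturbation step needed to realise the $H^r(\mathfrak{h},E)$ complex as a deformation retract of the coupled Rumin--Seshadri complex.
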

\begin{proof} It remains only to observe that for the Fubini--Study connection
we see from (\ref{tractor_curvature_on_CPn}) that
$\Theta:{\T}\to{\T}$ is an isomorphism. \end{proof}
\noindent The main point about Theorem~\ref{towardsBGG}, however, is that if
the representation ${\mathbb{E}}$ of ${\mathrm{Sp}}(2n+2,{\mathbb{R}})$ is
irreducible, then the representations $H^r({\mathfrak{h}},{\mathbb{E}})$ of
${\mathrm{Sp}}(2n,{\mathbb{R}})$ are also irreducible and are computed by a
theorem due to Kostant~\cite{K}. Specifically, if we denote the irreducible
representations of ${\mathrm{Sp}}(2n+2,{\mathbb{R}})$ and
${\mathrm{Sp}}(2n,{\mathbb{R}})$ by writing the highest weight as a linear
combination of fundamental weights and recording the coefficients over the
corresponding nodes of the Dynkin diagrams for $C_{n+1}$ and $C_n$, as is often
done, then Kostant's Theorem says that
$$\begin{array}{ccl}H^0({\mathfrak{h}},\begin{picture}(84,5)
\put(4,1.5){\line(1,0){42}}
\put(4,1.2){\makebox(0,0){$\bullet$}}
\put(16,1.2){\makebox(0,0){$\bullet$}}
\put(28,1.2){\makebox(0,0){$\bullet$}}
\put(40,1.2){\makebox(0,0){$\bullet$}}
\put(55,1.2){\makebox(0,0){$\cdots$}}
\put(62,1.5){\line(1,0){6}}
\put(68,1.2){\makebox(0,0){$\bullet$}}
\put(68,0.5){\line(1,0){12}}
\put(68,2.5){\line(1,0){12}}
\put(74,1.5){\makebox(0,0){$\langle$}}
\put(80,1.2){\makebox(0,0){$\bullet$}}
\put(4,6){\makebox(0,0)[b]{$\scriptstyle a$}}
\put(16,6){\makebox(0,0)[b]{$\scriptstyle b$}}
\put(28,6){\makebox(0,0)[b]{$\scriptstyle c$}}
\put(40,6){\makebox(0,0)[b]{$\scriptstyle d$}}
\put(68,6){\makebox(0,0)[b]{$\scriptstyle e$}}
\put(80,6){\makebox(0,0)[b]{$\scriptstyle f$}}
\end{picture})&=&\begin{picture}(72,5)
\put(4,1.5){\line(1,0){30}}
\put(4,1.2){\makebox(0,0){$\bullet$}}
\put(16,1.2){\makebox(0,0){$\bullet$}}
\put(28,1.2){\makebox(0,0){$\bullet$}}
\put(43,1.2){\makebox(0,0){$\cdots$}}
\put(50,1.5){\line(1,0){6}}
\put(56,1.2){\makebox(0,0){$\bullet$}}
\put(56,0.5){\line(1,0){12}}
\put(56,2.5){\line(1,0){12}}
\put(62,1.5){\makebox(0,0){$\langle$}}
\put(68,1.2){\makebox(0,0){$\bullet$}}
\put(4,6){\makebox(0,0)[b]{$\scriptstyle b$}}
\put(16,6){\makebox(0,0)[b]{$\scriptstyle c$}}
\put(28,6){\makebox(0,0)[b]{$\scriptstyle d$}}
\put(56,6){\makebox(0,0)[b]{$\scriptstyle e$}}
\put(68,6){\makebox(0,0)[b]{$\scriptstyle f$}}
\end{picture}\\[4pt]
H^1({\mathfrak{h}},\begin{picture}(84,5)
\put(4,1.5){\line(1,0){42}}
\put(4,1.2){\makebox(0,0){$\bullet$}}
\put(16,1.2){\makebox(0,0){$\bullet$}}
\put(28,1.2){\makebox(0,0){$\bullet$}}
\put(40,1.2){\makebox(0,0){$\bullet$}}
\put(55,1.2){\makebox(0,0){$\cdots$}}
\put(62,1.5){\line(1,0){6}}
\put(68,1.2){\makebox(0,0){$\bullet$}}
\put(68,0.5){\line(1,0){12}}
\put(68,2.5){\line(1,0){12}}
\put(74,1.5){\makebox(0,0){$\langle$}}
\put(80,1.2){\makebox(0,0){$\bullet$}}
\put(4,6){\makebox(0,0)[b]{$\scriptstyle a$}}
\put(16,6){\makebox(0,0)[b]{$\scriptstyle b$}}
\put(28,6){\makebox(0,0)[b]{$\scriptstyle c$}}
\put(40,6){\makebox(0,0)[b]{$\scriptstyle d$}}
\put(68,6){\makebox(0,0)[b]{$\scriptstyle e$}}
\put(80,6){\makebox(0,0)[b]{$\scriptstyle f$}}
\end{picture})&=&\begin{picture}(88,5)(-8,0)
\put(4,1.5){\line(1,0){38}}
\put(4,1.2){\makebox(0,0){$\bullet$}}
\put(24,1.2){\makebox(0,0){$\bullet$}}
\put(36,1.2){\makebox(0,0){$\bullet$}}
\put(51,1.2){\makebox(0,0){$\cdots$}}
\put(58,1.5){\line(1,0){6}}
\put(64,1.2){\makebox(0,0){$\bullet$}}
\put(64,0.5){\line(1,0){12}}
\put(64,2.5){\line(1,0){12}}
\put(70,1.5){\makebox(0,0){$\langle$}}
\put(76,1.2){\makebox(0,0){$\bullet$}}
\put(4,6){\makebox(0,0)[b]{$\scriptstyle a+b+1$}}
\put(24,6){\makebox(0,0)[b]{$\scriptstyle c$}}
\put(36,6){\makebox(0,0)[b]{$\scriptstyle d$}}
\put(64,6){\makebox(0,0)[b]{$\scriptstyle e$}}
\put(76,6){\makebox(0,0)[b]{$\scriptstyle f$}}
\end{picture}\\[4pt]
H^2({\mathfrak{h}},\begin{picture}(84,5)
\put(4,1.5){\line(1,0){42}}
\put(4,1.2){\makebox(0,0){$\bullet$}}
\put(16,1.2){\makebox(0,0){$\bullet$}}
\put(28,1.2){\makebox(0,0){$\bullet$}}
\put(40,1.2){\makebox(0,0){$\bullet$}}
\put(55,1.2){\makebox(0,0){$\cdots$}}
\put(62,1.5){\line(1,0){6}}
\put(68,1.2){\makebox(0,0){$\bullet$}}
\put(68,0.5){\line(1,0){12}}
\put(68,2.5){\line(1,0){12}}
\put(74,1.5){\makebox(0,0){$\langle$}}
\put(80,1.2){\makebox(0,0){$\bullet$}}
\put(4,6){\makebox(0,0)[b]{$\scriptstyle a$}}
\put(16,6){\makebox(0,0)[b]{$\scriptstyle b$}}
\put(28,6){\makebox(0,0)[b]{$\scriptstyle c$}}
\put(40,6){\makebox(0,0)[b]{$\scriptstyle d$}}
\put(68,6){\makebox(0,0)[b]{$\scriptstyle e$}}
\put(80,6){\makebox(0,0)[b]{$\scriptstyle f$}}
\end{picture})&=&\begin{picture}(88,5)(0,0)
\put(4,1.5){\line(1,0){46}}
\put(4,1.2){\makebox(0,0){$\bullet$}}
\put(24,1.2){\makebox(0,0){$\bullet$}}
\put(44,1.2){\makebox(0,0){$\bullet$}}
\put(59,1.2){\makebox(0,0){$\cdots$}}
\put(66,1.5){\line(1,0){6}}
\put(72,1.2){\makebox(0,0){$\bullet$}}
\put(72,0.5){\line(1,0){12}}
\put(72,2.5){\line(1,0){12}}
\put(78,1.5){\makebox(0,0){$\langle$}}
\put(84,1.2){\makebox(0,0){$\bullet$}}
\put(4,6){\makebox(0,0)[b]{$\scriptstyle a$}}
\put(24,6){\makebox(0,0)[b]{$\scriptstyle b+c+1$}}
\put(44,6){\makebox(0,0)[b]{$\scriptstyle d$}}
\put(72,6){\makebox(0,0)[b]{$\scriptstyle e$}}
\put(84,6){\makebox(0,0)[b]{$\scriptstyle f$}}
\end{picture}\\[4pt]
H^3({\mathfrak{h}},\begin{picture}(84,5)
\put(4,1.5){\line(1,0){42}}
\put(4,1.2){\makebox(0,0){$\bullet$}}
\put(16,1.2){\makebox(0,0){$\bullet$}}
\put(28,1.2){\makebox(0,0){$\bullet$}}
\put(40,1.2){\makebox(0,0){$\bullet$}}
\put(55,1.2){\makebox(0,0){$\cdots$}}
\put(62,1.5){\line(1,0){6}}
\put(68,1.2){\makebox(0,0){$\bullet$}}
\put(68,0.5){\line(1,0){12}}
\put(68,2.5){\line(1,0){12}}
\put(74,1.5){\makebox(0,0){$\langle$}}
\put(80,1.2){\makebox(0,0){$\bullet$}}
\put(4,6){\makebox(0,0)[b]{$\scriptstyle a$}}
\put(16,6){\makebox(0,0)[b]{$\scriptstyle b$}}
\put(28,6){\makebox(0,0)[b]{$\scriptstyle c$}}
\put(40,6){\makebox(0,0)[b]{$\scriptstyle d$}}
\put(68,6){\makebox(0,0)[b]{$\scriptstyle e$}}
\put(80,6){\makebox(0,0)[b]{$\scriptstyle f$}}
\end{picture})&=&\begin{picture}(88,5)(0,0)
\put(4,1.5){\line(1,0){46}}
\put(4,1.2){\makebox(0,0){$\bullet$}}
\put(16,1.2){\makebox(0,0){$\bullet$}}
\put(36,1.2){\makebox(0,0){$\bullet$}}
\put(59,1.2){\makebox(0,0){$\cdots$}}
\put(66,1.5){\line(1,0){6}}
\put(72,1.2){\makebox(0,0){$\bullet$}}
\put(72,0.5){\line(1,0){12}}
\put(72,2.5){\line(1,0){12}}
\put(78,1.5){\makebox(0,0){$\langle$}}
\put(84,1.2){\makebox(0,0){$\bullet$}}
\put(4,6){\makebox(0,0)[b]{$\scriptstyle a$}}
\put(16,6){\makebox(0,0)[b]{$\scriptstyle b$}}
\put(36,6){\makebox(0,0)[b]{$\scriptstyle c+d+1$}}
\put(72,6){\makebox(0,0)[b]{$\scriptstyle e$}}
\put(84,6){\makebox(0,0)[b]{$\scriptstyle f$}}
\end{picture}\\
\vdots\\
H^n({\mathfrak{h}},\begin{picture}(84,5)
\put(4,1.5){\line(1,0){42}}
\put(4,1.2){\makebox(0,0){$\bullet$}}
\put(16,1.2){\makebox(0,0){$\bullet$}}
\put(28,1.2){\makebox(0,0){$\bullet$}}
\put(40,1.2){\makebox(0,0){$\bullet$}}
\put(55,1.2){\makebox(0,0){$\cdots$}}
\put(62,1.5){\line(1,0){6}}
\put(68,1.2){\makebox(0,0){$\bullet$}}
\put(68,0.5){\line(1,0){12}}
\put(68,2.5){\line(1,0){12}}
\put(74,1.5){\makebox(0,0){$\langle$}}
\put(80,1.2){\makebox(0,0){$\bullet$}}
\put(4,6){\makebox(0,0)[b]{$\scriptstyle a$}}
\put(16,6){\makebox(0,0)[b]{$\scriptstyle b$}}
\put(28,6){\makebox(0,0)[b]{$\scriptstyle c$}}
\put(40,6){\makebox(0,0)[b]{$\scriptstyle d$}}
\put(68,6){\makebox(0,0)[b]{$\scriptstyle e$}}
\put(80,6){\makebox(0,0)[b]{$\scriptstyle f$}}
\end{picture})&=&
\begin{picture}(84,5)
\put(4,1.5){\line(1,0){30}}
\put(4,1.2){\makebox(0,0){$\bullet$}}
\put(16,1.2){\makebox(0,0){$\bullet$}}
\put(28,1.2){\makebox(0,0){$\bullet$}}
\put(43,1.2){\makebox(0,0){$\cdots$}}
\put(50,1.5){\line(1,0){6}}
\put(56,1.2){\makebox(0,0){$\bullet$}}
\put(56,0.5){\line(1,0){22}}
\put(56,2.5){\line(1,0){22}}
\put(67,1.5){\makebox(0,0){$\langle$}}
\put(78,1.2){\makebox(0,0){$\bullet$}}
\put(4,6){\makebox(0,0)[b]{$\scriptstyle a$}}
\put(16,6){\makebox(0,0)[b]{$\scriptstyle b$}}
\put(28,6){\makebox(0,0)[b]{$\scriptstyle c$}}
\put(56,6){\makebox(0,0)[b]{$\scriptstyle $}}
\put(78,6){\makebox(0,0)[b]{$\scriptstyle e+f+1$}}
\end{picture}
\end{array}$$
and for $r\geq n+1$, there are isomorphisms
$H^r({\mathfrak{h}},{\mathbb{E}})=H^{2n+1-r}({\mathfrak{h}},{\mathbb{E}})$.
Using the same notation for the bundles $H^r({\mathfrak{h}},E)$, the complexes 
of Theorem~\ref{towardsBGG} become
$$\begin{array}{rl}\begin{picture}(72,5)
\put(4,1.5){\line(1,0){30}}
\put(4,1.2){\makebox(0,0){$\bullet$}}
\put(16,1.2){\makebox(0,0){$\bullet$}}
\put(28,1.2){\makebox(0,0){$\bullet$}}
\put(43,1.2){\makebox(0,0){$\cdots$}}
\put(50,1.5){\line(1,0){6}}
\put(56,1.2){\makebox(0,0){$\bullet$}}
\put(56,0.5){\line(1,0){12}}
\put(56,2.5){\line(1,0){12}}
\put(62,1.5){\makebox(0,0){$\langle$}}
\put(68,1.2){\makebox(0,0){$\bullet$}}
\put(4,6){\makebox(0,0)[b]{$\scriptstyle b$}}
\put(16,6){\makebox(0,0)[b]{$\scriptstyle c$}}
\put(28,6){\makebox(0,0)[b]{$\scriptstyle d$}}
\put(56,6){\makebox(0,0)[b]{$\scriptstyle e$}}
\put(68,6){\makebox(0,0)[b]{$\scriptstyle f$}}
\end{picture}
&\xrightarrow{\,\nabla^{a+1}\,}
\begin{picture}(88,5)(-8,0)
\put(4,1.5){\line(1,0){38}}
\put(4,1.2){\makebox(0,0){$\bullet$}}
\put(24,1.2){\makebox(0,0){$\bullet$}}
\put(36,1.2){\makebox(0,0){$\bullet$}}
\put(51,1.2){\makebox(0,0){$\cdots$}}
\put(58,1.5){\line(1,0){6}}
\put(64,1.2){\makebox(0,0){$\bullet$}}
\put(64,0.5){\line(1,0){12}}
\put(64,2.5){\line(1,0){12}}
\put(70,1.5){\makebox(0,0){$\langle$}}
\put(76,1.2){\makebox(0,0){$\bullet$}}
\put(4,6){\makebox(0,0)[b]{$\scriptstyle a+b+1$}}
\put(24,6){\makebox(0,0)[b]{$\scriptstyle c$}}
\put(36,6){\makebox(0,0)[b]{$\scriptstyle d$}}
\put(64,6){\makebox(0,0)[b]{$\scriptstyle e$}}
\put(76,6){\makebox(0,0)[b]{$\scriptstyle f$}}
\end{picture}\\
&\enskip{}\xrightarrow{\,\nabla^{b+1}\,}
\begin{picture}(88,5)(0,0)
\put(4,1.5){\line(1,0){46}}
\put(4,1.2){\makebox(0,0){$\bullet$}}
\put(24,1.2){\makebox(0,0){$\bullet$}}
\put(44,1.2){\makebox(0,0){$\bullet$}}
\put(59,1.2){\makebox(0,0){$\cdots$}}
\put(66,1.5){\line(1,0){6}}
\put(72,1.2){\makebox(0,0){$\bullet$}}
\put(72,0.5){\line(1,0){12}}
\put(72,2.5){\line(1,0){12}}
\put(78,1.5){\makebox(0,0){$\langle$}}
\put(84,1.2){\makebox(0,0){$\bullet$}}
\put(4,6){\makebox(0,0)[b]{$\scriptstyle a$}}
\put(24,6){\makebox(0,0)[b]{$\scriptstyle b+c+1$}}
\put(44,6){\makebox(0,0)[b]{$\scriptstyle d$}}
\put(72,6){\makebox(0,0)[b]{$\scriptstyle e$}}
\put(84,6){\makebox(0,0)[b]{$\scriptstyle f$}}
\end{picture}\\
&\quad{}\xrightarrow{\,\nabla^{c+1}\,}
\begin{picture}(88,5)(0,0)
\put(4,1.5){\line(1,0){46}}
\put(4,1.2){\makebox(0,0){$\bullet$}}
\put(16,1.2){\makebox(0,0){$\bullet$}}
\put(36,1.2){\makebox(0,0){$\bullet$}}
\put(59,1.2){\makebox(0,0){$\cdots$}}
\put(66,1.5){\line(1,0){6}}
\put(72,1.2){\makebox(0,0){$\bullet$}}
\put(72,0.5){\line(1,0){12}}
\put(72,2.5){\line(1,0){12}}
\put(78,1.5){\makebox(0,0){$\langle$}}
\put(84,1.2){\makebox(0,0){$\bullet$}}
\put(4,6){\makebox(0,0)[b]{$\scriptstyle a$}}
\put(16,6){\makebox(0,0)[b]{$\scriptstyle b$}}
\put(36,6){\makebox(0,0)[b]{$\scriptstyle c+d+1$}}
\put(72,6){\makebox(0,0)[b]{$\scriptstyle e$}}
\put(84,6){\makebox(0,0)[b]{$\scriptstyle f$}}
\end{picture}\\[-4pt]
&\qquad\vdots\\
&\qquad{}\xrightarrow{\,\nabla^{e+1}\,}
\begin{picture}(84,5)
\put(4,1.5){\line(1,0){30}}
\put(4,1.2){\makebox(0,0){$\bullet$}}
\put(16,1.2){\makebox(0,0){$\bullet$}}
\put(28,1.2){\makebox(0,0){$\bullet$}}
\put(43,1.2){\makebox(0,0){$\cdots$}}
\put(50,1.5){\line(1,0){6}}
\put(56,1.2){\makebox(0,0){$\bullet$}}
\put(56,0.5){\line(1,0){22}}
\put(56,2.5){\line(1,0){22}}
\put(67,1.5){\makebox(0,0){$\langle$}}
\put(78,1.2){\makebox(0,0){$\bullet$}}
\put(4,6){\makebox(0,0)[b]{$\scriptstyle a$}}
\put(16,6){\makebox(0,0)[b]{$\scriptstyle b$}}
\put(28,6){\makebox(0,0)[b]{$\scriptstyle c$}}
\put(56,6){\makebox(0,0)[b]{$\scriptstyle $}}
\put(78,6){\makebox(0,0)[b]{$\scriptstyle e+f+1$}}
\end{picture}\\
&\enskip\qquad{}\xrightarrow{\,\nabla^{2f+2}\,}
\begin{picture}(84,5)
\put(4,1.5){\line(1,0){30}}
\put(4,1.2){\makebox(0,0){$\bullet$}}
\put(16,1.2){\makebox(0,0){$\bullet$}}
\put(28,1.2){\makebox(0,0){$\bullet$}}
\put(43,1.2){\makebox(0,0){$\cdots$}}
\put(50,1.5){\line(1,0){6}}
\put(56,1.2){\makebox(0,0){$\bullet$}}
\put(56,0.5){\line(1,0){22}}
\put(56,2.5){\line(1,0){22}}
\put(67,1.5){\makebox(0,0){$\langle$}}
\put(78,1.2){\makebox(0,0){$\bullet$}}
\put(4,6){\makebox(0,0)[b]{$\scriptstyle a$}}
\put(16,6){\makebox(0,0)[b]{$\scriptstyle b$}}
\put(28,6){\makebox(0,0)[b]{$\scriptstyle c$}}
\put(56,6){\makebox(0,0)[b]{$\scriptstyle $}}
\put(78,6){\makebox(0,0)[b]{$\scriptstyle e+f+1$}}
\end{picture}\\
&\qquad{}\xrightarrow{\,\nabla^{e+1}\,}\cdots\\
&\qquad\vdots\\
&\xrightarrow{\,\nabla^{a+1}\,}
\begin{picture}(72,5)
\put(4,1.5){\line(1,0){30}}
\put(4,1.2){\makebox(0,0){$\bullet$}}
\put(16,1.2){\makebox(0,0){$\bullet$}}
\put(28,1.2){\makebox(0,0){$\bullet$}}
\put(43,1.2){\makebox(0,0){$\cdots$}}
\put(50,1.5){\line(1,0){6}}
\put(56,1.2){\makebox(0,0){$\bullet$}}
\put(56,0.5){\line(1,0){12}}
\put(56,2.5){\line(1,0){12}}
\put(62,1.5){\makebox(0,0){$\langle$}}
\put(68,1.2){\makebox(0,0){$\bullet$}}
\put(4,6){\makebox(0,0)[b]{$\scriptstyle b$}}
\put(16,6){\makebox(0,0)[b]{$\scriptstyle c$}}
\put(28,6){\makebox(0,0)[b]{$\scriptstyle d$}}
\put(56,6){\makebox(0,0)[b]{$\scriptstyle e$}}
\put(68,6){\makebox(0,0)[b]{$\scriptstyle f$}}
\end{picture},
\end{array}$$
for arbitrary non-negative integers $a,b,c,d,\cdots,e,f$. When all these
integers are zero, this is the Rumin--Seshadri complex. Just the first three
terms in this complex, in the special case when only $a$ is non-zero, are
already essential in~\cite{EG}. For example, if $a=1$, then the first two
differential operators are
$$\sigma\mapsto\nabla_a\nabla_b\sigma+\Phi_{ab}\sigma
\quad\mbox{and}\quad
\phi_{bc}\mapsto
\big(\nabla_a\phi_{bc}-\nabla_b\phi_{ac})_\perp$$
where $\phi_{bc}$ is symmetric and $(\enskip)_\perp$ means to take the 
trace-free part with respect to $J_{ab}$. 
{From} the curvature decomposition and Bianchi identity we find that their
composition is
$$\sigma\longmapsto
V_{ab}{}^d{}_c\nabla_d\sigma+Y_{abc}\sigma,$$
which vanishes in case $V_{ab}{}^c{}_d=0$. In case $\Theta$ is invertible, 
as for the Fubini--Study connection, we conclude that this sequence of 
differential operators is locally exact.

\end{document}